\newcommand\mysup[1]{^{(#1)}}
\def\js{\mathcal{C}\mysup{k}_{2,n}}
\def\jstwo{\mathcal{C}\mysup{k}_{2,2}}
\def\jsthree{\mathcal{C}\mysup{k}_{2,3}}
\def\cjs{\mathcal{V}(\mathcal{C}\mysup{k}_{2,n})}
\def\cjsm{\mathcal{V}(\mathcal{C}\mysup{k}_{2,m})}
\def\cjsu{\mathcal{V}(\mathcal{C}\mysup{k}_{2,u})}
\def\cjsv{\mathcal{V}(\mathcal{C}\mysup{k}_{2,v})}
\def\cjones{\mathcal{V}(\mathcal{C}\mysup{1}_{2,n})}
\def\cjsthree{\mathcal{V}(\mathcal{C}\mysup{k}_{2,3})}
\def\disc{\Delta_k(a)}
\def\Ubar{\overline{U}}
\def\Spec{\text{Spec}}
\def\U{\mathcal{U}}
\begin{document}

\renewcommand{\baselinestretch}{1.1}

\newtheorem{Theorem}{Theorem}[section]
\newtheorem{Lemma}[Theorem]{Lemma}
\newtheorem{Proposition}[Theorem]{Proposition}
\newtheorem{Corollary}[Theorem]{Corollary}

\theoremstyle{remark}
\newtheorem{Remark}[Theorem]{Remark} 
\newtheorem{Notation}[Theorem]{Notation}
\newtheorem{Definition}[Theorem]{Definition}
\newtheorem{Note}[Theorem]{Note}
\newtheorem{Example}[Theorem]{Example}

\author{B.A. Sethuraman}
\author{Klemen \v{S}ivic}
\thanks{First author supported by  National Science Foundation grant DMS-0700904.  Second author supported by Slovenian Research Agency.}
\address{Dept. of Mathematics\\California State University
Northridge\\Northridge CA 91330\\U.S.A.}
\address{Institute of Mathematics, Physics, and Mechanics\\University of Ljubljana\\Jadranska
19\\ 1000 Ljubljana\\Slovenia}
\title{Jet Schemes of the Commuting Matrix Pairs Scheme}
\email{al.sethuraman@csun.edu} \email{klemen.sivic@fmf.uni-lj.si}

\begin{abstract}
We show that for all $k\ge 1$, there exists an integer $N(k)$ such
that for all $n\ge N(k)$ the $k$-th order jet scheme over the
commuting $n\times n$ matrix pairs  scheme is reducible.
At the other end of the spectrum, it is known that for all $k\ge
1$, the $k$-th order jet scheme over the commuting $2\times 2$
matrices is irreducible: we show that the same holds for $n=3$.
\end{abstract}

\maketitle


\section  {Introduction}  \label{secn:intro}
Recall that if $F$ is an algebraically closed field, 
$\{
x_{i,j},\ y_{i,j}, 1\le i,j\le n\}$ are variables, and $X =
(x_{i,j})$, $Y=(y_{i,j})$, the commuting $n\times n$ matrix pairs
scheme $\mathcal{C}_{2,n}$ is $\Spec(F[\{ x_{i,j},\ y_{i,j}\}]/J)$,
where $J$ is the ideal generated by the $n^2$ components of the
matrix $XY-YX$.  It is known that $\mathcal{C}_{2,n}$ is
irreducible of dimension $n^2+n$ (see \cite{MOTT} or
\cite{Gerst}), but it is an open problem if it is reduced.

For any scheme $X$ of finite type over $F$, we may define the
scheme of $k$-th order jets $X\mysup{k}$, whose closed points are
in bijection with all morphisms $\Spec(F[t]/t^{k+1})
\rightarrow X$ (see e.g. \cite[\S 2]{M2}). When $X$ is affine,
$X\mysup{k}$ is also affine.  In particular, for the affine scheme
 $\mathcal{C}_{2,n}$, $\js$ may be identified with
$\Spec(R)$ where $R$ is defined as follows: Let $t$,
$x\mysup{s}_{i,j}$, and $y\mysup{s}_{i,j}$ ($0\le s \le k$, $1\le i,j
\le n$)  be new variables. For each generator $f_u$ of
$J$, replace each $x_{i,j}$ and $y_{i,j}$ by $\sum_{s=0}^k
x\mysup{s}_{i,j} t^s$ and $\sum_{s=0}^k y\mysup{s}_{i,j} t^s$
respectively, and expand $f_u$ as $\sum_{s=0}^k f\mysup{s}_{u}
t^s$ mod $t^{k+1}$. Let $J\mysup{k}$  be the ideal of $F[\{x\mysup{s}_{i,j},y\mysup{s}_{i,j}\}]$ generated by
$f\mysup{0}_u$, $\dots$, $f\mysup{k}_u$, $u = 1,\dots, n^2$. Then
$R = F[\{x\mysup{s}_{i,j},y\mysup{s}_{i,j}\}]/J\mysup{k}$. (Intuitively,
the closed points of  $\js$
yield paremeterized curves in $\mathbf{A}^{2n^2}$ of degree $k$
that vanish to degree $k$ at closed points of $X$.)

Interest in jet schemes arises from the connections between the
singularities of $X$ and the irreducibility of $X\mysup{k}$.  For
instance, Musta\c{t}\v{a} (\cite{M1}) shows that for $X$ a locally
complete intersection, $X$ has rational singularities if and only
if all jet schemes $X\mysup{k}$ are irreducible. Much earlier,
Nash (\cite{N1}) related the arc space of $X$ (the projective
limit of the various $X\mysup{k}$) to the exceptional divisors
over the singular points of $X$ in a resolution of singularities.

Our   interest in the specific jet schemes $\js$ arises from an
open problem in commuting matrices. It is unknown whether the
algebra
 $F[A,B,C]$ generated by three commuting $n\times n$ matrices $A$,
 $B$, and $C$,
has dimension bounded by $n$. (The corresponding answer is yes for
algebras generated by two commuting matrices, and no for the
algebra generated by four or more commuting matrices, see
\cite{Gur} for instance.) It is natural, while attacking this
problem, to fix $C$ to be of various special forms, and to study
the set of commuting pairs of matrices $(A,B)$ that lie in the
centralizer of $C$.  Write $J_{k+1}$ for the nilpotent Jordan
block of size $k+1$. When $C$ has the special form consisting of
$n$ copies of  $J_{k+1}$ along the diagonal, the centralizer of
$C$ in $M_{n(k+1)}(F)$ is isomorphic to $M_n(F[t])/t^{k+1} =
M_n(F) [t]/t^{k+1}$ (with $C$ corresponding to $t$), and the
commuting pairs $(A,B)$ in the centralizer of $C$ are just pairs
$A(t)=A_0 + A_1 t + \cdots + A_k t^k$, and $B(t)=B_0 + B_1 t +
\cdots + B_k t^k$ that commute in $M_n(F)[t]/t^{k+1}$.

(This is elementary, and can be gleaned from the Toeplitz
structure of the matrices that commute with $C$--see e.g.,
\cite{Gant}. Any matrix $A \in M_{n(k+1)}(F)$ that commutes with
$C$ is of the block form $(A_{i,j})_{i,j}$, where each $A_{i,j}$
is a $(k+1)\times (k+1)$ matrix of the form
\begin{equation*}
\left(%
\begin{array}{ccccc}
  x\mysup{0}_{i,j} & x\mysup{1}_{i,j} & x\mysup{2}_{i,j} & \dots & x\mysup{k}_{i,j} \\
  0 & x\mysup{0}_{i,j} & x\mysup{1}_{i,j}& x\mysup{2}_{i,j} & \vdots \\
  \vdots & \ddots & \ddots & \ddots & \ddots \\
  \vdots & \ddots & \ddots & \ddots & x\mysup{1}_{i,j} \\
  0 & \dots &\dots & 0 & x\mysup{0}_{i,j} \\
\end{array}%
\right)
\end{equation*}

 The $n\times n$ matrix $A_0$ is defined as $(x\mysup{0}_{i,j})_{i,j}$,
 $A_1$ as  $(x\mysup{1}_{i,j})_{i,j}$, etc. We will routinely identify
the centralizer of $C$ in $M_{n(k+1)}(F)$  with $M_n(F)
[t]/t^{k+1}$.)

It follows by expanding the commutator $[A(t),B(t)]$ in powers of $t$
that the equations for $A(t)$ and $B(t)$ to commute are just the
generators of $J\mysup{k}$ described above. Hence, the commuting pairs
in the centralizer of $C$ correspond to the closed points of
$\js$ as $F$ is algebraically closed. Irreducibility of $\js$ would show that for all commuting
triples of $n(k+1) \times n(k+1)$ matrices $(A,B,C)$ with $C$
having Jordan form equal to $n$ copies of $J_{k+1}$ upto addition of scalars, the dimension
of $F[A,B,C]$ would be bounded by $n(k+1)$. (See Corollary \ref{cor: algebra dimension 3(k+1)} ahead for the $n=3$ case, for example.) It is thus of interest
to know if $\js$ is irreducible.

The goal of this paper is to show that the answer to the question
is negative in general: we prove that for $k\ge 1$, there exists
an integer $N(k)$ such that for all $n\ge N(k)$, $\js$ is
reducible.  At the other end of the spectrum, we also show that
$\js$ is irreducible for $n=3$ and all $k\ge 1$. (It is known--see
\cite{NS}--that $\js$ is irreducible for $n=2$ and all $k\ge 1$. )

We note in passing that the results of \cite{NS} showed that for
$n=2$, $\js$ is very naturally related to jet schemes over
determinantal varieties of rank at most $1$, which then led the
first named author along with Toma\v{z} Ko\v{s}ir to study jet
schemes over determinantal varieties in general (\cite{KoSe1},
\cite{KoSe2}, see also \cite{J}).  These were also studied
independently by Cornelia Yuen in her thesis (\cite{CY}).

This paper is a direct result of collaboration between the authors
during the 5th Linear Algebra Workshop held at Kranjska Gora,
Slovenia in 2008.  The authors wish to thank the organizers for
the enjoyable and very productive meeting.

\section  {A Distinguished Open Set of
$\js$}\label{secn:openset} Consider the open subscheme $\U$
defined by the condition that $A_0 = (x\mysup{0}_{i,j})$ is $1$-regular. (Recall that
the matrix $A_0$ is $r$-regular if all eigenspaces of $A_0$ are of
dimension at most $r$; this is an open set condition. See
\cite[Prop. 1]{NS}. $1$-regular matrices in $M_n(F)$ can be
characterized by several equivalent conditions, including: (i)
$\text{dim}_F(F[A_0]) = n$, and (ii) the centralizer of $F[A_0]$
in $M_n(F)$ is precisely $F[A_0]$.)  The goal of this section is
to prove that $\U$ is irreducible, of dimension $ (n^2+n)(k+1)$.

We will give two proofs of this result. The first proof involves a
characterization of matrices $A(t)=A_0 + A_1 t + \cdots + A_k t^k$
such that $A_0$ is $1$-regular in terms of the algebra generated
by $A(t)$ and $t$.
The second proof invokes general results about jet schemes over
smooth schemes and uses a Jacobian computation in \cite{NeSa}.
Once $\U$ is known to be irreducible, we can argue that if $\js$
were irreducible it should equal the closure of $\U$, so it should
also have dimension $ (n^2+n)(k+1)$. In Section \ref{secn:red}
ahead, we will use this expected dimension to show that $\js$ is
reducible for large enough $n$.

\subsection{A Characterization of Matrices $A(t)$ with
$A_0$ $1$-regular} \label{subsec:characterization of A(t)}
We begin with the first proof that $\U$ is irreducible of the
stated dimension.
Let $\cjs$ denote  the
closed points of $\js$, which, since $F$ is algebraically closed, is just the algebraic set in
$\textbf{A}^{2n^2(k+1)}$ consisting of matrices $A(t)=A_0 +
A_1 t + \cdots + A_k t^k$, and $B(t)=B_0 + B_1 t + \cdots + B_k
t^k$ that commute in $M_n(F)[t]/t^{k+1}$. Similarly, let $U = \U \cap \cjs$  be the set of
closed points of $\U$ (\cite[Chap. 2, Lemma 4.3]{Liu}), so $U$ consists of those commuting pairs above
in which $A_0$ is $1$-regular.  Note that $\cjs$ is dense in $\js$ (see e.g. \cite[Chap. 2, Remark 3.49]{Liu}).  Using the fact that every prime ideal of $R$ (where $R$ is as in Section \ref{secn:intro}) is an intersection of maximal ideals, it is easy to see that $U$ is dense in $\U$ and that the dimension of $U$ as an algebraic set is the same as the dimension of $\U$.  We will hence
work with $\cjs$ and $U$.  The goal in this subsection is to
prove the irreducibility of $U$ by deriving  a characterization of
matrices $A(t)=A_0 + A_1 t + \cdots + A_k t^k$ such that $A_0$ is
$1$-regular in terms of the algebra generated by $A(t)$ and $t$.

\begin{Notation} \label{notn:symm_poly} Let $z_1$, $\dots$, $z_k$ be
noncommuting variables.  Given $n_1$ copies of $z_1$, $\dots$,
$n_k$ copies of $z_k$,   we will denote by
$S(z_1^{[n_1]},\dots,z_k^{[n_k]})$ the sum of all noncommuting
monomials of degree $n_1$ in $z_1$, $\dots$, $n_k$ in $z_k$.  For
instance, $S(x^{[2]}, y^{[1]}) = x^2y + xyx + yx^2$.

We will find it convenient to also adopt the following convention:
given the not necessarily   distinct noncommutative variables
$y_{1}$, $\dots$, $y_{t}$, of which $n_1$ are all equal (to say
$y_{i_1}$), $\dots$, $n_k$ are all equal (to say $y_{i_k}$), $n_1
+ \cdots + n_k = t$, the expression   $S(y_{1},\dots , y_{t})$
will denote the polynomial
$S(y_{i_1}^{[n_1]},\dots,y_{i_k}^{[n_k]})$ defined above.  Notice
that if $y_{\sigma(1)}$, $\dots$, $y_{\sigma(t)}$ is any
rearrangement of the list $y_{1}$, $\dots$, $y_{t}$, then
$S(y_{1},\dots , y_{t}) = S(y_{\sigma(1)},\dots , y_{\sigma(t)})$
just by definition.

Expressions such as $S(x^{[d]},y_{1},\dots , y_{t})$, with $d>0$
and $y_{1}$, $\dots$, $y_{t}$ not necessarily distinct among
themselves but distinct from $x$, are similarly defined.  We
define $S(x^{[0]},y_{1},\dots , y_{t})$ to be $S(y_{1},\dots ,
y_{t})$, and $S(x^{[d]},y_{1},\dots , y_{t})$ to be zero if $d<0$.
\end{Notation}

\begin{Notation} \label{notn:dy(q)} Let $q(x) = \sum_{j=0}^{N-1} c_j
x^j$ be a polynomial with coefficients in $F$ in the variable $x$.
Given the not necessarily distinct noncommutative variables
$y_{1}$, $\dots$, $y_{t}$, none of which commutes with $x$, we let
$d_{y_{1},\dots,y_{{t}}}(q(x))$ denote the (noncommutative)
polynomial  $\sum_{j=0}^{N-1} c_j
S(x^{[j-t]},y_{1},\dots,y_{{t}})$ (recall from above the
convention for $S(x^{[j-t]},y_{1},\dots,y_{{t}})$ if $j-t \le 0$).
%
Note that the polynomial
$d_{y_{1},\dots,y_{{t}}}(q(x))$ is   symmetric with respect to the
permutation of the $y_{j}$ because $S$ is symmetric with respect
to the permutation of the $y_{j}$.

Given matrices $A_0$, $\dots$, $A_k$, an expression such as
$d_{A_{i_1}\dots A_{i_r}}(q_j(A_0))$ indicates the following: Take
the polynomial $d_{y_{i_1}\dots y_{i_r}}(q_j(x))$ defined above
for the noncommuting variables $x, y_{i_1},\dots, y_{i_r}$ and the
polynomial $q_j(x) \in F[x]$, and evaluate it at $x=A_0$ and
$y_{i_j} = A_{i_j}$.
\end{Notation}

We begin with the following lemma which will go into our
characterization:

\begin{Lemma} \label{lemma:equivalent_powers_of_A(t)} Given $N > 0$
and $N(k+1)$ elements $c_{i,j} \in F$ for $0\le i < N$, $0 \le j
\le k$, define the polynomials $q_j(x)$, $0\le j \le k$ by $q_j(x)
= \sum_{i=0}^{N-1} c_{i,j} x^i$.  Given $A(t) = A_0 + A_1 t +
\cdots + A_k t^k$ in $M_n(F)[t]/t^{k+1}$, we have
\begin{equation} \label{eqn:rewrite_powers_of_A(t)}
\sum_{j=0}^k \sum_{i=0}^{N-1} c_{i,j}\left(A(t)\right)^i t^j = B_0
+ B_1 t + \cdots + B_k t^k
\end{equation}
 where $B_0 = q_0(A_0)$, and more
generally, for $s = 1,\dots, k$,
\begin{equation} \label{eqn:formula_for_B_s}
B_s = \sum_{j=0}^{s-1}\left( \sum_{r=1}^{s-j} \sum_{{i_1\ge i_2
\ge \dots \ge i_r
> 0 }\atop{ i_1+ \cdots + i_r = s-j}}d_{A_{i_1}\dots A_{i_r}}(q_j(A_0)) \right) + q_s(A_0)
\end{equation}

\end{Lemma}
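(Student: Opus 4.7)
The plan is to expand $A(t)^i$ as a polynomial in $t$ with noncommutative matrix coefficients and then reorganize the resulting sum so that the polynomials $d_{A_{i_1}\dots A_{i_r}}(q_j(A_0))$ emerge naturally. The whole proof is a direct bookkeeping computation.

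First I would write
$$A(t)^i \equiv \sum_{(j_1,\dots,j_i)\in\{0,\dots,k\}^i} A_{j_1}\cdots A_{j_i}\, t^{j_1+\cdots+j_i} \pmod{t^{k+1}},$$
so that after multiplying by $c_{i,j}t^j$ and summing over $i$ and $j$, the coefficient $B_s$ of $t^s$ on the left hand side of \eqref{eqn:rewrite_powers_of_A(t)} equals
$$B_s \;=\; \sum_{j=0}^{s}\sum_{i=0}^{N-1} c_{i,j} \sum_{{(j_1,\dots,j_i)}\atop{j_1+\cdots+j_i=s-j}} A_{j_1}\cdots A_{j_i}.$$
I would then group the ordered tuples $(j_1,\dots,j_i)$ by the multiset of their strictly positive entries: a tuple with exactly $r$ positive entries, rearrangeable as $i_1\ge\cdots\ge i_r>0$ with $i_1+\cdots+i_r=s-j$, necessarily satisfies $r\le i$ and contributes, once summed over all interleavings of the $r$ nonzero entries with the $i-r$ zero entries, the symmetric sum $S(A_0^{[i-r]},A_{i_1},\dots,A_{i_r})$ by the very definition of $S$ in Notation \ref{notn:symm_poly}.

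Next I would interchange the sums over $i$ and $r$ and apply Notation \ref{notn:dy(q)} directly:
$$\sum_{i=0}^{N-1} c_{i,j}\, S(A_0^{[i-r]},A_{i_1},\dots,A_{i_r}) \;=\; d_{A_{i_1}\dots A_{i_r}}(q_j(A_0)),$$
where the convention that $S(A_0^{[i-r]},\dots)=0$ for $i<r$ automatically allows the summation over $i$ to begin at $0$. Finally, separating off the term $j=s$ (which forces $s-j=0$ and hence $r=0$, contributing $\sum_i c_{i,s}A_0^i=q_s(A_0)$) from the terms $0\le j\le s-1$ (for which $s-j>0$ forces $r\ge 1$) produces precisely \eqref{eqn:formula_for_B_s}.

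The proof is essentially combinatorial with no real obstacle; the only care required is notational, namely verifying that $S$ counts each interleaving of $A_0$'s with $A_{i_1},\dots,A_{i_r}$ exactly once (which is immediate from the convention in Notation \ref{notn:symm_poly}) and that summing over partitions $i_1\ge\cdots\ge i_r>0$ of $s-j$ correctly recovers, via the $S$-sums, every ordered length-$i$ tuple of entries in $\{0,\dots,k\}$ summing to $s-j$.
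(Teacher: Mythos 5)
Your proof is correct and follows essentially the same route as the paper: expand $\left(A(t)\right)^i$ in powers of $t$, recognize the coefficient of $t^l$ as the sum of the symmetric expressions $S(A_0^{[i-r]},A_{i_1},\dots,A_{i_r})$ over partitions of $l$, then sum against the $c_{i,j}$ to produce the $d$-expressions and split off the $j=s$ term. Your explicit grouping of ordered tuples by their multiset of positive entries is just a more detailed justification of the step the paper states as its formula for $G_l^{(i)}$, so there is nothing to add.
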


\begin{proof} We first consider a single power
$\left(A(t)\right)^i = \left(A_0 +A_1 t + \cdots + A_k
t^k\right)^i$.  Expanding the parenthesis, we may write this as
$\sum_{l=0}^k G_l^{(i)} t^l$, where $G_0^{(i)} =
\left(A_0\right)^i$, and for $l \ge 1$,
\begin{equation} \label{eqn:formula_for_G_l^{(i)}}
G_l^{(i)} = \sum_{r=1}^l \sum_{{i_1\ge i_2 \ge \dots \ge i_r
> 0 }\atop{ i_1+ \cdots + i_r = l}} S(A_0^{[i-r]},A_{i_1},\dots
A_{i_r})
\end{equation}

We thus write \begin{equation} \label{eqn:rewrite_in_terms_of_G}
\sum_{j=0}^k \sum_{i=0}^{N-1} c_{i,j}\left(A(t)\right)^i t^j =
\sum_{j=0}^k \sum_{i=0}^{N-1} c_{i,j}(G_0^{(i)} + G_1^{(i)} t +
\cdots + G_k^{(i)} t^k) t^j
\end{equation}
We wish to rewrite the right side of the equation above as $B_0 +
B_1 t + \cdots + B_k t^k$. Now $B_0$ is the coefficient of $t^0$
on the right side of the equation above, which is simply
$\sum_{i=0}^{N-1} c_{i,0} G_0^{(i)}$. Since $G_0^{(i)} =
\left(A_0\right)^i$, we find $B_0 = q_0(A_0)$, with $q_0$ as in
the statement of the theorem.

Similarly, for $s \ge 1$, $B_s$ is the coefficient of $t^s$ on the
right side of Equation (\ref{eqn:rewrite_in_terms_of_G}), so
\begin{equation} \label{eqn:collect_B_s}
B_s = \sum_{j=0}^s \sum_{i=0}^{N-1} c_{i,j} G_{s-j}^{(i)}
\end{equation}
When $j=s$, the inner term on the right side of the equation above
is $\sum_{i=0}^{N-1} c_{i,s} G_{0}^{(i)} = \sum_{i=0}^{N-1}
c_{i,s} \left(A_0\right)^i = q_s(A_0) $.

For a fixed $j = j^* < s$, the the inner term on the right side of
Equation (\ref{eqn:collect_B_s}) equals, from Equation
(\ref{eqn:formula_for_G_l^{(i)}}) above,
\begin{eqnarray*}
\sum_{i=0}^{N-1} c_{i,j^*}\left[ \sum_{r=1}^{s-j^*} \sum_{{i_1\ge
i_2 \ge \dots \ge i_r
> 0 }\atop{ i_1+ \cdots + i_r =
 s-j^*}} S(A_0^{[i-r]},A_{i_1},\dots
A_{i_r}) \right] &=&\\
 \sum_{r=1}^{s-j^*} \sum_{{i_1\ge i_2 \ge
\dots \ge i_r
> 0 }\atop{ i_1+ \cdots + i_r = s-j^*}} d_{A_{i_1}\dots A_{i_r}}(
q_{j^*}(A_0)) &&
\end{eqnarray*}
Adding together these expressions for $j = s, s-1, \dots, 0$, we
find that $B_s$ is indeed as described in the statement of the
theorem.

\end{proof}

We now prove the following:

\begin{Theorem} \label{theorem:characterization of A(t) with A0
1-regular}  Let $A=A(t) = A_0 + A_1 t + \cdots + A_k t^k$ be given
in $M_n(F)[t]/t^{k+1} \subset M_{n(k+1)}(F)$.  Then the following
are equivalent:
\begin{enumerate}
\item \label{theorem_enum:1-regular} $A_0$ is $1$-regular.
\item\label{theorem_enum:linear_independent_powers} The $n(k+1)$
elements $\left(A(t)\right)^i t^j $, $0 \le i < n$, $0 \le j \le
k$ in $F[A,t]$ are $F$-linearly independent.
\item\label{theorem_enum:F[A,t] has full dimension} $F[A,t]$
has dimension $n(k+1)$ as an  $F$-subalgebra of
$M_n(F)[t]/t^{k+1}$ (and hence of $M_{n(k+1)}(F)$).
\item\label{theorem:enum: B(t) is poly in A and t} Any $B=B(t)$ that commutes with both $A(t)$ and $t$ is
for the form $B(t) = \sum_{j=0}^k \sum_{i=0}^{n-1}
c_{i,j}\left(A(t)\right)^i t^j$, for arbitrary elements $c_{i,j}
\in F$, $0 \le i < n$, $0 \le j \le k$.
\item\label{theorem:enum: inductive_form_of_B(t)} Any $B=B(t)$ that commutes with both $A(t)$ and $t$ is
for the form $B(t) = B_0 + B_1 t + \cdots + B_kt^k$, where $B_0 =
q_0(A_0)$, and more generally, for $s = 1,\dots, k$,
\begin{equation*} \label{theorem:enum:formula_for_B_s}
B_s = \sum_{j=0}^{s-1}\left( \sum_{r=1}^{s-j} \sum_{{i_1\ge i_2
\ge \dots \ge i_r
> 0 }\atop{ i_1+ \cdots + i_r = s-j}}d_{A_{i_1}\dots A_{i_r}}(q_j(A_0)) \right) + q_s(A_0)
\end{equation*} where $q_0$, $q_1$, $\dots$, $q_k$ are arbitrary
polynomials with coefficients in $F$ of degree at most $n-1$.

\end{enumerate}

\end{Theorem}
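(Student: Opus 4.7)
The plan is to prove the equivalences via the implications $(2) \Leftrightarrow (3)$, $(4) \Leftrightarrow (5)$, $(1) \Rightarrow (2)$, $(2) \Rightarrow (1)$, $(1) \Rightarrow (4)$, and finally $(4) \Rightarrow (1)$ closing the loop.

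The equivalence $(2) \Leftrightarrow (3)$ is immediate from Cayley--Hamilton applied to $A(t)$ over the ring $R = F[t]/t^{k+1}$: $A(t)$ satisfies a monic polynomial of degree $n$ over $R$, so the $n(k+1)$ elements $A(t)^i t^j$ with $0 \le i < n$, $0 \le j \le k$, span $F[A,t]$, and their linear independence is equivalent to $\dim_F F[A,t] = n(k+1)$. The equivalence $(4) \Leftrightarrow (5)$ is a direct restatement via Lemma~\ref{lemma:equivalent_powers_of_A(t)}.

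For $(1) \Rightarrow (2)$, I would apply Lemma~\ref{lemma:equivalent_powers_of_A(t)} to rewrite a vanishing combination $\sum c_{i,j} A^i t^j = 0$ as $\sum_s B_s t^s = 0$, so each $B_s = 0$; inducting on $s$ and using that whenever $q_j \equiv 0$ as a polynomial for $j < s$ all the derivative terms $d_{A_{i_1}\cdots A_{i_r}}(q_j(A_0))$ vanish formally, the formula for $B_s$ collapses to $q_s(A_0) = 0$, and $1$-regularity forces $q_s \equiv 0$, i.e., $c_{i,s} = 0$ for all $i$. For $(2) \Rightarrow (1)$, I take the contrapositive: if the minimal polynomial $q$ of $A_0$ has degree less than $n$, then $q(A(t))$ has vanishing constant term, so $q(A(t)) \cdot t^k = 0$ is a nontrivial dependence. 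For $(1) \Rightarrow (4)$, I would induct on $k$: reducing modulo $t^k$, the inductive hypothesis supplies $q_0, \dots, q_{k-1}$ of degree $< n$ with $B(t) \equiv \sum_{j=0}^{k-1} q_j(A(t)) t^j \pmod{t^k}$, and the remainder $D_k t^k$ satisfies $[A_0, D_k] = 0$ from the order-$t^k$ piece of $[A(t), B(t)] = 0$; $1$-regularity then yields $D_k = q_k(A_0)$, and $q_k(A_0) t^k \equiv q_k(A(t)) t^k \pmod{t^{k+1}}$.

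The main obstacle is $(4) \Rightarrow (1)$, which I would settle by a dimension comparison. Under $(4)$, $Z = \text{Cent}_{M_n(R)}(A(t))$ equals $F[A,t]$ (the reverse inclusion always holds), so $\dim_F Z \le \dim_F F[A,t] \le n(k+1)$ by Cayley--Hamilton. For the matching lower bound I invoke upper semi-continuity of $\dim_F \ker(\text{ad}_{A(t)})$ as $A(t)$ varies over $M_n(R) \cong \mathbf{A}^{n^2(k+1)}$: on the dense open locus where $A_0$ is $1$-regular, the already-established implications give $Z = F[A,t]$ with $\dim_F = n(k+1)$, and upper semi-continuity forces $\dim_F Z \ge n(k+1)$ at every specialization. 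Equality then yields $\dim_F F[A,t] = n(k+1)$, which is $(3)$, hence $(1)$.
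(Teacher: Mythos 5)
Your proposal is correct, and for the hard implications it takes a genuinely different route from the paper. The parts that coincide are (4)$\Leftrightarrow$(5) and (1)$\Rightarrow$(2), which both you and the paper get from Lemma \ref{lemma:equivalent_powers_of_A(t)}. The differences: the paper closes the cycle (1)$\Rightarrow$(2)$\Rightarrow$(3)$\Rightarrow$(1), proving (2)$\Rightarrow$(3) by quoting the classical bound on two-generated commutative subalgebras and proving (3)$\Rightarrow$(1) by a filtration of $F[A,t]$ by $t$-degree together with padding $A$ out to $M_n(F)[t]/t^{k+l+1}$ and contradicting the $n(k+l+1)$ bound for large $l$; it then gets (3)$\Leftrightarrow$(4) by citing Neubauer--Saltman's theorem that a two-generated commutative subalgebra of maximal dimension is its own centralizer. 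You instead get (2)$\Leftrightarrow$(3) in one stroke from Cayley--Hamilton over $F[t]/t^{k+1}$ (the $n(k+1)$ elements $A^it^j$, $i<n$, $j\le k$, always span $F[A,t]$), replace the paper's filtration argument by the much shorter contrapositive (2)$\Rightarrow$(1) via $q(A(t))\,t^k=0$ for the minimal polynomial $q$ of a non-$1$-regular $A_0$, prove (1)$\Rightarrow$(4) directly by induction on $k$ using that the centralizer of a nonderogatory $A_0$ is $F[A_0]$ (this is in effect a proof of what the paper later records as Corollary \ref{cor:surectivity of projection}), and settle (4)$\Rightarrow$(1) by upper semicontinuity of $\dim_F\ker(\mathrm{ad}_{A(t)})$ on $M_n(F[t]/t^{k+1})$, which gives $\dim_F Z\ge n(k+1)$ at every point because this holds on the dense open locus where $A_0$ is $1$-regular; combined with the Cayley--Hamilton upper bound this forces (3), hence (1). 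Your route is more self-contained and elementary---it needs neither the Neubauer--Saltman theorem nor the padding/filtration argument---at the cost of importing one geometric ingredient (specialization/semicontinuity over the affine space of all $A(t)$), whereas the paper's purely algebraic filtration technique is reused elsewhere (in \S 2.2's smoothness argument indirectly, and explicitly in the Proposition of \S 5), so the paper gets additional mileage from the machinery you bypass.
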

\begin{proof}
Note that (\ref{theorem:enum: B(t) is poly in A and t})
$\Leftrightarrow$ (\ref{theorem:enum: inductive_form_of_B(t)})
 is an
immediate consequence of Lemma
\ref{lemma:equivalent_powers_of_A(t)}, with the $N$ of that lemma
replaced by $n$.

\noindent (\ref{theorem_enum:1-regular}) $\Rightarrow$
(\ref{theorem_enum:linear_independent_powers}):   Since $A_0$ is
$1$-regular, the matrices $I_n$, $A_0$, $\dots$,
$\left(A_0\right)^{n-1}$ are $F$-linearly independent in $M_n(F)$.
It follows immediately, by sequentially considering the
coefficients of $t^0$, $t^1$, $\dots$, $t^k$ in the left hand side
of an equation such as
 \begin{equation*} \sum_{j=0}^k
\sum_{i=0}^{n-1} c_{i,j}\left(A(t)\right)^i t^j = 0
\end{equation*}
 for arbitrary elements $c_{i,j} \in F$, that the $n(k+1)$
elements $\left(A(t)\right)^i t^j $, $0 \le i < n$, $0 \le j \le
k$, must be $F$-linearly independent.

\noindent (\ref{theorem_enum:linear_independent_powers})
$\Rightarrow$ (\ref{theorem_enum:F[A,t] has full dimension}): This
implication is clear since the algebra generated by two commuting
matrices in $M_{n(k+1)}(F)$ has dimension at most $n(k+1)$ by
classical results (see \cite{Gur} for example).

\noindent (\ref{theorem_enum:F[A,t] has full dimension})
$\Rightarrow$ (\ref{theorem_enum:1-regular}): Let ${A'} =
A_0 + A_1 t + \cdots + A_k t^k + 0 t^{k+1} + \cdots + 0 t^{k+l}
\in M_n(F)[t]/t^{k+l+1}$, where $l \ge 0$ is yet to be determined.
(Note that $M_n(F)[t]/t^{k+l+1} \subset M_{n(k+l+1)}(F)$, with $t$
corresponding to the $n(k+l+1) \times n(k+l+1)$ matrix consisting
of $n$ Jordan blocks of size $k+l+1$ along the diagonal.) Write
$E$ for the algebra $F[A,t] \subset M_n(F)[t]/t^{k+1}$, and
${E'}$ for the algebra $F[{A'},t] \subset
M_n(F)[t]/t^{k+l+1}$. Given a nonzero element $B = B_i t^i +
B_{i+1}t^{i+1} + \cdots$ in $M_n(F)[t]/t^{k+1}$ with $B_i \neq 0$,
we define the \textsl{degree} of $B$ to be $i$; we also define the
degree of $0$ to be infinity. We have the filtration by $F$-spaces
$E = E_0 \supset E_1 \supset \cdots \supset E_k \supset
E_{k+1}=0$, where $E_i$ consists of those elements in $E$ with
degree $i$ or higher. We have a similar filtration ${E'}
= {E'_0} \supset {E'_1} \supset \cdots \supset
{E'_{k+l}} \supset  {E'_{k+l+1}} = 0$.
 The map $\pi:
M_n(F)[t]/t^{k+l+1} \rightarrow M_n(F)[t]/t^{k+1}$ that maps the
element $B_0 + B_1 t + \cdots + B_k t^k + B_{k+1} t^{k+1} + \cdots
+ B_{k+l}t^{k+l}$ to $B_0 + B_1 t + \cdots + B_k t^k$ is an
$F$-algebra homomorphism and in particular an $F$ vector space
homomorphism. Equation (\ref{eqn:formula_for_B_s}) in Lemma
\ref{lemma:equivalent_powers_of_A(t)} (applied with $k+l$
substituted for $k$) shows that the coefficient of $t^s$ of a
polynomial expression $f({A'},t)$ depends only on the coefficients
of $t^0$, $t^1$, $\dots$, $t^s$ in ${A'}$. More specifically, if
$f({A'},t) =  B_it^i + B_{i+1} t^{i+1} + \cdots + B_k t^k +
B_{k+1} t^{k+1} + \cdots + B_{k+l}t^{k+l}$ $\mod t^{k+l+1}$, then
$f(A,t) = B_it^i + B_{i+1} t^{i+1} + \cdots + B_k t^k$ $\mod
t^{k+1}$.  Thus, $\pi(f({A'},t)) = f(A,t)$, so $\pi$ maps each
${E'_i}$ surjectively to ${E_i}$ for $i = 0, \dots, k$. Moreover,
the kernel of the induced  $F$ vector space map $ {E'_i} \mapsto
E_i \mapsto E_i/E_{i+1}$ is precisely $ {E'_{i+1}}$.  We thus have
$F$ vector space isomporphisms between $ {E'_i}/{E'_{i+1}}$ and
$E_i/E_{i+1}$ for $i=0,\dots, k$.

Now assume that $F[A,t]$ has dimension $n(k+1)$ but that $A_0$ is
not $1$-regular.  If we write a polynomial expression $f(A,t)$ as
$B_0 + B_1 t + \cdots + B_k t^k$, Lemma
\ref{lemma:equivalent_powers_of_A(t)} tells us that $B_0 =
q_0(A_0)$, for a suitable polynomial $q_0$. Conversely, given a
polynomial $q_0 \in F[x]$, $q_0(A) = q_0(A_0) + t(\ldots)$. It
follows that $E_0/E_1 \cong F[A_0]$.   Since $A_0$   not being
$1$-regular means that $\dim_F F[A_0] < n$, we find $\dim_F
E_0/E_1 < n$. The fact that $\dim_F E = n(k+1)$ and that $\dim_F E
= \sum_{i=0}^k \dim_F \left( E_i/E_{i+1}\right)$ shows that
$\dim_F \left( E_i/E_{i+1}\right)
> n$ for some $i$ with $k\ge i\ge 1$.  It follows from the isomorphism above
 that $\dim_F {E'_i}/{E'_{i+1}} > n$.  But for any $j < k+l$, we
have an injective $F$ vector space map from
${E'_j}/{E'_{j+1}}$ to
${E'_{j+1}}/{E'_{j+2}}$ that sends the class of
an element $B_jt^j + B_{j+1}t^{j+1} + \cdots$ to the class of the
element $B_jt^{j+1} + B_{j+1}t^{j+2} + \cdots$.
Hence $\dim_F {E'_j}/{E'_{j+1}} > n$ for all $j$
with $k+l \ge j \ge i$. It is clear that by taking $l$ large
enough, we can make $\dim_F {E'} = \sum_{i=0}^{k+l}
\dim_F {E'_i}/{E'_{i+1}}
> n(k+l+1)$, no matter what the values of $\dim_F {E'_s}/{E'_{s+1}}$
are for $s < i$. But this violates the classical result that the
dimension of the $F$-algebra generated by the commuting matrices
$ A'$ and $t$ in $M_n(F)[t]/t^{k+l+1} \subset
M_{n(k+l+1)}(F)$ is bounded by $n(k+l+1)$.  Hence, $A_0$ must be
$1$-regular.

\noindent (\ref{theorem_enum:F[A,t] has full dimension})
$\Leftrightarrow$ (\ref{theorem:enum: B(t) is poly in A and t}):
Both implications follow from  \cite[Theorem 1.1]{NeSa}, which
states that $F[A,t] \subseteq M_{n(k+1)}(F)$ has dimension
$n(k+1)$ iff the centralizer of $F[A,t]$ in $M_{n(k+1)}(F)$ is
$F[A,t]$ itself, along with Part
(\ref{theorem_enum:linear_independent_powers}) above of this
theorem.

\end{proof}

We have the following corollary immediately:

\begin{Corollary} \label{cor:irred of U}
The sets $U \subset \cjs$ and $\U\subset \js$ are irreducible, of
dimension $(n^2+n)(k+1)$.
\end{Corollary}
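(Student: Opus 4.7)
The plan is to deduce the Corollary directly from Theorem 2.4 by exhibiting $U$ as the image of an irreducible variety under a bijective morphism. Let $W \subset \mathbf{A}^{n^2(k+1)}$ be the open subset consisting of tuples $(A_0, A_1, \dots, A_k)$ with $A_0$ $1$-regular; since $1$-regularity is an open condition, $W$ is irreducible of dimension $n^2(k+1)$. Define the morphism
\[
\Phi: W \times \mathbf{A}^{n(k+1)} \longrightarrow U
\]
by
\[
\Phi\bigl((A_0,\dots,A_k);(c_{i,j})\bigr) = \bigl(A(t),\,B(t)\bigr),\quad A(t) = \sum_{i=0}^k A_i t^i,\quad B(t) = \sum_{j=0}^k \sum_{i=0}^{n-1} c_{i,j}\,A(t)^i t^j.
\]
By Lemma 2.3, the coefficients of $B(t)$ are polynomial in the entries of the $A_i$ and the $c_{i,j}$, so $\Phi$ is indeed a morphism of affine varieties, and clearly $B(t)$ commutes with $A(t)$, so the image lies in $U$.

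The content of Theorem 2.4 makes $\Phi$ a bijection onto $U$: surjectivity is exactly the equivalence $(1) \Leftrightarrow (4)$, and injectivity follows from $(1) \Leftrightarrow (2)$, which says the monomials $A(t)^i t^j$ for $0 \le i < n$, $0 \le j \le k$ are $F$-linearly independent whenever $A_0$ is $1$-regular, so distinct choices of the $c_{i,j}$ produce distinct $B(t)$. Since $W \times \mathbf{A}^{n(k+1)}$ is irreducible, its image $U$ under the morphism $\Phi$ is irreducible as well.

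For the dimension, consider the projection $\pi: U \to W$ on the first factor. For each point of $W$, the fiber of $\pi$ is, by Theorem 2.4(4) together with the linear independence from (2), an affine space of dimension exactly $n(k+1)$. Therefore
\[
\dim U = \dim W + n(k+1) = n^2(k+1) + n(k+1) = (n^2+n)(k+1).
\]
Finally, the excerpt observes that $U$ is dense in $\U$ and has the same dimension as $\U$. Hence irreducibility of $U$ transfers to $\U$, and both have dimension $(n^2+n)(k+1)$.

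There is no real obstacle: the entire content has been packaged into Theorem 2.4, which effectively trivializes $U$ as a bundle over the irreducible base $W$. The only minor care needed is to confirm via Lemma 2.3 that $\Phi$ is a morphism (not merely a set-theoretic bijection) so that image-irreducibility and the fiber-dimension formula apply.
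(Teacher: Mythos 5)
Your proposal is correct and is essentially the paper's own argument: both identify $U$, via Theorem \ref{theorem:characterization of A(t) with A0 1-regular} Part (\ref{theorem:enum: B(t) is poly in A and t}) (with injectivity from Part (\ref{theorem_enum:linear_independent_powers})), with the product of the open set of tuples $(A_0,\dots,A_k)$ having $A_0$ $1$-regular and the affine space $\mathbf{A}^{n(k+1)}$ of coefficients $c_{i,j}$, then transfer the conclusion to $\U$ by the density remark. You merely make explicit the parameterizing morphism $\Phi$ and the fiber-dimension count that the paper states as an isomorphism of algebraic sets.
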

\begin{proof}
Viewed as algebraic sets, the set of all $A(t)$ with $A_0$ $1$-regular is an open set in
$\mathbf{A}^{n^2(k+1)}$, since $A_0$ is constrained to live in the
open set of $\mathbf{A}^{n^2}$ of $1$-regular matrices while
$A_1$, $\dots$, $A_k$ can be arbitrary.  Theorem
\ref{theorem:characterization of A(t) with A0 1-regular},  Part
(\ref{theorem:enum: B(t) is poly in A and t})  above shows that
the set of matrices $B(t)$ that commute with $A(t)$ is defined by
the $n(k+1)$ arbitrary elements $c_{i,j} \in F$. Hence  $U$, which is the set of commuting pairs $(A(t), B(t))$ with $A_0$ $1$-regular, is
isomorphic as an algebraic set to a product of an open set of $\mathbf{A}^{n^2(k+1)}$
and $\mathbf{A}^{n(k+1)}$. The irreducibility and
dimension of $U$ immediately follow.  The same results hold for $\U$ as described at the beginning of this subsection.


\end{proof}

The theorem yields another corollary:

\begin{Corollary} \label{cor:surectivity of projection}
Let $A(t) = A_0 + A_1 t + \cdots + A_k t^k $ and $B(t) = B_0 + B_1
t + \cdots + B_k t^k$ commute in $M_n(F)[t]/t^{k+1}$, and assume
that $A_0$ is $1$-regular. Then for any $A_{k+1} \in M_n(F)$,
there exists $B_{k+1} \in M_n(F)$ such that $A'(t) = A_0 + A_1 t +
\cdots + A_k t^k + A_{k+1}t_{k+1}$ and $B(t) = B_0 + B_1 t +
\cdots + B_k t^k + B_{k+1}t_{k+1}$ commute in $M_n(F)[t]/t^{k+2}$.
In particular, the map $\pi_{k+1} :
\mathcal{V}(\mathcal{C}\mysup{k+1}_{2,n})\rightarrow \cjs$ that
sends a general pair $(A_0 + A_1 t + \cdots + A_k t^k +
A_{k+1}t_{k+1},B_0 + B_1 t + \cdots + B_k t^k + B_{k+1}t_{k+1})$
to $(A_0 + A_1 t + \cdots + A_k t^k , B_0 + B_1 t + \cdots + B_k
t^k )$ is surjective when restricted to the open sets of
$\mathcal{V}(\mathcal{C}\mysup{k+1}_{2,n})$ and $\cjs$ where $A_0$
is $1$-regular.

\end{Corollary}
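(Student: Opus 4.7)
The plan is to lift the polynomial representation of $B(t)$ in $A(t)$ and $t$ (guaranteed by the preceding theorem) from $M_n(F)[t]/t^{k+1}$ to $M_n(F)[t]/t^{k+2}$, using any prescribed $(k+1)$-st coefficient for the lift of $A(t)$, and then to check compatibility with the quotient map by pulling the polynomial formula back through it.

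First, I will apply part (\ref{theorem:enum: B(t) is poly in A and t}) of Theorem \ref{theorem:characterization of A(t) with A0 1-regular}: since $A_0$ is $1$-regular, there exist constants $c_{i,j}\in F$ with $0\le i<n$, $0\le j\le k$ such that
\[
B(t)\;=\;\sum_{j=0}^{k}\sum_{i=0}^{n-1} c_{i,j}\,(A(t))^{i}\,t^{j} \qquad \text{in } M_n(F)[t]/t^{k+1}.
\]
Given any $A_{k+1}\in M_n(F)$, form $A'(t)=A_0+A_1t+\cdots+A_kt^k+A_{k+1}t^{k+1}$ in $M_n(F)[t]/t^{k+2}$, and, \emph{using the same constants} $c_{i,j}$, define
\[
B'(t)\;=\;\sum_{j=0}^{k}\sum_{i=0}^{n-1} c_{i,j}\,(A'(t))^{i}\,t^{j} \qquad \text{in } M_n(F)[t]/t^{k+2}.
\]
By construction $B'(t)$ is a polynomial expression in the commuting elements $A'(t)$ and $t$, and so commutes with $A'(t)$ in $M_n(F)[t]/t^{k+2}$.

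The second step is to verify that $B'(t)\equiv B(t)\pmod{t^{k+1}}$, which will exhibit the desired $B_{k+1}$ as the coefficient of $t^{k+1}$ in $B'(t)$. For this I invoke the canonical $F$-algebra quotient map $\pi\colon M_n(F)[t]/t^{k+2}\to M_n(F)[t]/t^{k+1}$; since $\pi(A'(t))=A(t)$ and $\pi(t)=t$, applying $\pi$ to the displayed definition of $B'(t)$ recovers the displayed expression for $B(t)$, so $\pi(B'(t))=B(t)$. The surjectivity assertion is then immediate: the pair $(A'(t),B'(t))$ lies in $\mathcal{V}(\mathcal{C}\mysup{k+1}_{2,n})$, maps to $(A(t),B(t))$ under $\pi_{k+1}$, and its zeroth coefficient $A_0$ remains $1$-regular, placing it in the required open set.

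The argument is quite direct once Theorem \ref{theorem:characterization of A(t) with A0 1-regular} is in hand; the only point that genuinely needs to be noticed is that the \emph{same} constants $c_{i,j}$ can be reused to produce the lift, which then makes compatibility with $\pi$ automatic. As a sanity check, one could alternatively use Lemma \ref{lemma:equivalent_powers_of_A(t)} to observe that the coefficient of $t^s$ for $s\le k$ in a polynomial expression in $A'(t)$ and $t$ depends only on $A_0,\dots,A_s$ and is therefore unaffected by the new term $A_{k+1}t^{k+1}$.
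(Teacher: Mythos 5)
Your proof is correct and takes essentially the same approach as the paper: both use Theorem \ref{theorem:characterization of A(t) with A0 1-regular} to express $B(t)$ as a polynomial in $A(t)$ and $t$, then lift that polynomial expression verbatim to $M_n(F)[t]/t^{k+2}$ with the new coefficient $A_{k+1}$. The only cosmetic difference is that you invoke Part (\ref{theorem:enum: B(t) is poly in A and t}) together with the canonical quotient map $\pi$, while the paper cites Part (\ref{theorem:enum: inductive_form_of_B(t)}) and observes directly from the explicit formula that $B_s$ depends only on $A_0,\dots,A_s$; these two formulations are equivalent via Lemma \ref{lemma:equivalent_powers_of_A(t)}.
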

\begin{proof}
This is immediate from Theorem \ref{theorem:characterization of
A(t) with A0 1-regular},  Part (\ref{theorem:enum:
inductive_form_of_B(t)}), where we see that the solution for $B_s$
$(s=0, \dots, k)$ only depends on $A_i$ $(i=0, \dots, s)$,
and that having solved for $B_s$, one
can solve for $B_{s+1}$ given any $A_{s+1}$.
\end{proof}

\begin{Remark}  Note that $\pi_1: \mathcal{C}\mysup{1}_{2,n}
\rightarrow \mathcal{C}_{2,n}$ is trivially surjective since given
$A_0$ and $B_0$ in $M_n(F)$ that commute, the matrices $A_0 +
0\cdot t$ and $B_0 + 0\cdot t$ commute in $M_n(F)[t]/t^2$.
However, this simple extension to higher degree in $t$ fails for
$k
> 1$. For instance, let $X$ and $Y$ be any two matrices in $M_n(F)$
that do not commute with each other.  Then $A = 0 + X t$ and $B =
0 + Yt$ commute in $M_n(F)[t]/t^2$, but it is clear by writing
down the equation for the $t^2$ component that $A' = 0 + X t + P
t^2$ and $B' = 0 + Yt + Q t^2$ cannot commute in $M_n(F)[t]/t^3$
for any choice of $P$ and $Q$.

\end{Remark}

We also have the following:
\begin{Corollary} \label{cor:stuff_in_ad(A_0)}
Given the commuting pair $A(t) = A_0 + A_1 t + \cdots + A_k t^k $
and $B(t) = B_0 + B_1 t + \cdots + B_k t^k$   in
$M_n(F)[t]/t^{k+1}$, with $A_0$ $1$-regular, we have:
\begin{equation*}
\left[A_1,B_k\right] + \left[A_2,B_{k-1}\right] + \cdots +
\left[A_k,B_1\right] \in Ad(A_0)
\end{equation*}

\end{Corollary}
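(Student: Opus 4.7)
The plan is to leverage Corollary \ref{cor:surectivity of projection} to extend the commuting pair one degree further in $t$, and then read off the coefficient of $t^{k+1}$ from the commutator equation.

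First I would observe that since $A(t)$ and $B(t)$ commute in $M_n(F)[t]/t^{k+1}$ and $A_0$ is $1$-regular, Corollary \ref{cor:surectivity of projection} applies with the specific choice $A_{k+1}=0$. That gives us some $B_{k+1}\in M_n(F)$ such that the extended elements
\begin{equation*}
A'(t) = A_0 + A_1 t + \cdots + A_k t^k + 0\cdot t^{k+1}, \qquad B'(t) = B_0 + B_1 t + \cdots + B_k t^k + B_{k+1} t^{k+1}
\end{equation*}
commute in $M_n(F)[t]/t^{k+2}$.

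Next I would expand the commutator $[A'(t), B'(t)]$ and pick out the coefficient of $t^{k+1}$. That coefficient is
\begin{equation*}
\sum_{i+j=k+1,\ 0\le i,j\le k+1} [A'_i, B'_j] = [A_0, B_{k+1}] + \sum_{i=1}^{k} [A_i, B_{k+1-i}] + [0, B_0],
\end{equation*}
and it must vanish because $A'(t)$ and $B'(t)$ commute modulo $t^{k+2}$. Rearranging yields
\begin{equation*}
[A_1, B_k] + [A_2, B_{k-1}] + \cdots + [A_k, B_1] = -[A_0, B_{k+1}] = [B_{k+1}, A_0] \in \mathrm{Ad}(A_0),
\end{equation*}
which is exactly the claim.

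There is essentially no obstacle here once Corollary \ref{cor:surectivity of projection} is in hand: the content of that corollary is precisely the solvability of $[A_0, B_{k+1}] = -\sum_{i=1}^{k}[A_i, B_{k+1-i}]$ for $B_{k+1}$, which is the same as saying the right-hand side lies in the image of $\mathrm{ad}(A_0)$. So the corollary above is really a restatement of the content of Corollary \ref{cor:surectivity of projection} specialized to $A_{k+1}=0$, and the proof amounts to making this observation explicit.
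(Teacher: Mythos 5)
Your proposal is correct and follows exactly the paper's own argument: apply Corollary \ref{cor:surectivity of projection} with $A_{k+1}=0$, read off the $t^{k+1}$ coefficient of the extended commutator, and move $[A_0,B_{k+1}]$ to the other side (the image of $\mathrm{ad}(A_0)$ being a subspace, the sign is harmless). Nothing further is needed.
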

\begin{proof}
Taking $A_{k+1}=0$, Corollary \ref{cor:surectivity of projection}
shows that we can find $B_{k+1}$ such that $A_0 + A_1 t + \cdots +
A_k t^k +  0\cdot t^{k+1}$ and $B(t) = B_0 + B_1 t + \cdots + B_k
t^k + B_{k+1}t^{k+1}$ commute in $M_n(F)[t]/t^{k+2}$. Writing down
the equation for the matrices in $t^{k+1}$, we find
\begin{equation*}
\left[A_0, B_{k+1}\right] +  \left[A_1,B_k\right] +
\left[A_2,B_{k-1}\right] + \cdots + \left[A_k,B_1\right]   = 0
\end{equation*}
Since the first term is in $Ad(A_0)$, the sum of the remaining
terms must also be so.

\end{proof}

\begin{Remark} Note that the proofs of Theorem \ref{theorem:characterization of A(t) with A0
1-regular}, and Corollaries \ref{cor:surectivity of projection} and \ref{cor:stuff_in_ad(A_0)} did not depend on $F$ being algebraically closed.
\end{Remark}

\subsection{Alternative Proof of Corollary \ref{cor:irred of U}}
For the second proof that $\U$ is irreducible of dimension
$(n^2+n)(k+1)$, we recall the following: if $X$ is a smooth scheme
of finite type over $F$ of dimension $d$, then $X\mysup{k}$ is an
$\textbf{A}^{dk}$ bundle over $X$.  See \cite[\S 2]{M2} for
instance. The heart of the result is that for a smooth scheme of
finite type over $F$ of dimension $d$, there exists an open cover
$U_{\alpha}$ with \'{e}tale maps $g_{\alpha}: U_{\alpha}
\rightarrow \textbf{A}^d$ (see e.g., \cite[\S 6.2.2]{Liu}), and
since $g_{\alpha}$ is \'{e}tale, $U_{\alpha}\mysup{k} \cong
\left(\textbf{A}^d\right)\mysup{k} \times_{\textbf{A}^d}
U_{\alpha}$.

Recall that since $F$ is algebraically closed, smoothness and regularity coincide (see \cite[Chap. 4, Definition 3.28]{Liu}).
In \cite[Theorem 1.1]{NeSa}, the authors use the Jacobian criterion
to show that a closed point
$(A_0,B_0)$ of the commuting matrices scheme $\mathcal{C}_{2,n}$
is regular  if and only if
$\text{dim}_F[A_0,B_0]=n$. (Note that the Jacobian criterion in
\cite{NeSa} is applied to the ideal $J$ defined in Section
\ref{secn:intro}, which is not known to be radical, and hence the
result of \cite[Theorem 1.1, Part 4]{NeSa} really applies to the
commuting matrices \textit{scheme} and not necessarily the
commuting matrices variety in the sense used in \cite{NeSa}.) When
$A_0$ is $1$-regular, then the two characterizations described at
the beginning of Section \ref{secn:openset} show that
$\text{dim}_F[A_0,B_0]=n$ for any $B_0$ that commutes with $A_0$.
Thus, letting $\mathcal{W}$ denote the open subscheme of
$\mathcal{C}_{2,n}$ where $A_0$ is $1$-regular and $W$ the set of
closed points of $\mathcal{W}$, we find that the points of $W$ are
all nonsingular, and hence (see \cite[Chap. 4, Corollary 2.17]{Liu} for
instance), $\mathcal{W}$ is smooth. Thus, $\U$, which is
$\mathcal{W}\mysup{k}$ (as $\mathcal{W} \rightarrow \mathcal{C}_{2,n}$ is \'{e}tale) is
simply an $\mathbf{A}^{(n^2+n)k}$ bundle over
$\mathcal{W}$. Being an open set of $\mathcal{C}_{2,n}$,
$\mathcal{W}$ is irreducible of dimension $n^2+n$, so $\U$ is
irreducible of dimension $(n^2+n)(k+1)$.

\section {Reducibility of $\js$ for Large
$n$} \label{secn:red}

We prove in this section that for any $k$ and large enough $n$,
$\js$ is reducible. We show that for each $k\ge 1$, there exists
an integer $N(k)$ such that for all $n\ge N(k)$, $\cjs$ has a
closed set of dimension greater than $(n^2+n)(k+1)$.  Since the
open set $U$ considered in the previous section is irreducible of
dimension $(n^2+n)(k+1)$, $\cjs$ cannot be irreducible for such
$n$, and hence $\js$ cannot be irreducible for such $n$.


\begin{Theorem}
For each $k\ge 1$, there exists an integer $N(k)$ such that for
all $n\ge N(k)$, $\cjs$, and hence $\js$, is reducible.
\end{Theorem}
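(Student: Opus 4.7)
The plan is to exhibit a closed subset $Z\subset\cjs$ of dimension strictly greater than $(n^2+n)(k+1)$ for all $n\ge N(k)$. Since by Corollary~\ref{cor:irred of U} the open set $U$ is irreducible of dimension exactly $(n^2+n)(k+1)$, any such $Z$ cannot be contained in $\overline{U}$, forcing $\cjs$ to carry an irreducible component distinct from $\overline{U}$; hence $\js$ is reducible.

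The strategy is to pass to a stratum of $\cjs$ sitting over a very non-regular $A_0$. Concretely, I would fix the Jordan type $\lambda=(2,2,\dots,2)$ with $n/2$ parts (for $n$ even), for which the centralizer dimension is $c(\lambda)=n^2/2$ and the orbit $\mathcal{O}_\lambda\subset M_n(F)$ has dimension $n^2/2$; the centralizer itself is isomorphic to $M_{n/2}(F)[\epsilon]/\epsilon^2$ via the usual block description $\left(\begin{smallmatrix}X & Y\\ 0 & X\end{smallmatrix}\right)$. Over such $A_0$, the coefficient $B_0$ lies in $C(A_0)$ of dimension $n^2/2$, and whenever the right-hand side of $[A_0,B_s]=-\sum_{i=1}^{s}[A_i,B_{s-i}]$ lies in $\operatorname{Im}(\operatorname{ad} A_0)$, each $B_s$ (for $s\ge 1$) sits in an affine space of dimension $c(\lambda)=n^2/2$---rather than the mere $n$ available on $U$. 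Naively accounting for the $A_0$-orbit, for $B_0\in C(A_0)$, for $A_1,\dots,A_k$ free in $M_n(F)$, and for $B_1,\dots,B_k$ each of dimension $c(\lambda)$, the best-case dimension of this stratum is
\begin{equation*}
\tfrac{n^2}{2}+\tfrac{n^2}{2}+kn^2+k\cdot\tfrac{n^2}{2}\;=\;(k+1)n^2+k\cdot\tfrac{n^2}{2},
\end{equation*}
which strictly exceeds $(k+1)(n^2+n)$ as soon as $k\cdot n^2/2>(k+1)n$, i.e., for $n>2(k+1)/k$. I would then pass to a suitable closure to obtain the closed subset $Z$, choose $N(k)$ as the first $n$ for which the dimension count---adjusted for the solvability losses discussed below---exceeds $(n^2+n)(k+1)$, and conclude reducibility. (A small modification of $\lambda$ such as $(2^{(n-1)/2},1)$, for which $c(\lambda)=(n^2+1)/2$, handles the case of $n$ odd.)

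The main obstacle is to control the cascading solvability conditions $\sum_{i=1}^{s}[A_i,B_{s-i}]\in\operatorname{Im}(\operatorname{ad} A_0)$ for $s=1,\dots,k$. Each of these equations naively imposes an independent codimension $c(\lambda)=n^2/2$ on the parameters, and if all $k$ such constraints were genuinely independent they would precisely cancel the gain $k\cdot c(\lambda)$ from the enlarged centralizer, leaving only the baseline $(k+1)n^2$---insufficient to beat $U$. One must therefore isolate a sub-family of the stratum in which many of these conditions hold automatically or are correlated. A plausible route, inspired by Corollary~\ref{cor:stuff_in_ad(A_0)} and the derivation-type identities of Lemma~\ref{lemma:equivalent_powers_of_A(t)}, is to choose the $B_s$'s inductively as the specific particular solutions built from polynomials in $A_0$ acting on the block structure of $C(A_0)\cong M_{n/2}(F)[\epsilon]/\epsilon^2$, and then to exhibit---via a direct block-matrix computation---that the right-hand side of the next equation automatically lands in $\operatorname{Im}(\operatorname{ad} A_0)$ up to at most $O(n)$ extra scalar conditions. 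Once the total loss is controlled to be subleading in $n^2$, the inequality $\dim Z>(n^2+n)(k+1)$ follows for all sufficiently large $n$, yielding the required $N(k)$.
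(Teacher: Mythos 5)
Your overall strategy---produce a closed subset of $\cjs$ of dimension at least $(n^2+n)(k+1)$ supported over non-$1$-regular $A_0$, and conclude reducibility by comparison with the irreducible open set $U$ of Corollary \ref{cor:irred of U}---is exactly the paper's strategy. But your write-up stops at precisely the step that carries all of the difficulty, and for the stratum you chose the missing step looks unsalvageable as stated. Take $A_0$ of Jordan type $(2^{m})$, $n=2m$, and write $A_0=\left(\begin{smallmatrix}0&I\\0&0\end{smallmatrix}\right)$, $B_0=\left(\begin{smallmatrix}X_0&Y_0\\0&X_0\end{smallmatrix}\right)$, $A_1=\left(\begin{smallmatrix}P&Q\\R&S\end{smallmatrix}\right)$ in $m\times m$ blocks. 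Then $\operatorname{Im}(\operatorname{ad}A_0)$ is the set of matrices $\left(\begin{smallmatrix}U&V\\0&-U\end{smallmatrix}\right)$, and the first obstruction $[A_1,B_0]\in\operatorname{Im}(\operatorname{ad}A_0)$ is equivalent to $[R,X_0]=0$ together with $[P+S,X_0]+[R,Y_0]=0$. For $B_0$ generic in $C(A_0)$ (so $X_0$ is $1$-regular) these are on the order of $n^2/2-O(n)$ independent linear conditions on $A_1$, not ``$O(n)$ extra scalar conditions'': the obstruction essentially cancels the gain of $n^2/2$ you obtained by letting $B_1$ range over a coset of $C(A_0)$, and the same happens at every level $s\ge 2$ via the term $[A_s,B_0]$. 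What survives after the cancellation is $O(n)$ per level with no argument that it exceeds the $n$ per level already available on $U$; my own rough count suggests it does not. You can cheapen the obstruction by making $B_0$ special (e.g.\ $X_0$ with a large centralizer), but then $B_0$ no longer contributes $n^2/2$, and you give no accounting showing this trade-off can be won. So the essential content of the theorem---an explicit configuration for which the obstructions are provably cheap while the centralizer gains are retained---is missing from the proposal.

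The paper resolves exactly this tension, but by a different mechanism and with a different Jordan type. It takes $A_0$ nilpotent of type $(3^a,1^b)$ with $n=3a+b$ (the second block size $b$ is a free parameter needed to make the final inequality solvable), restricts $B_0$ to an explicit commuting family rather than all of $C(A_0)$, and constrains $A_1,B_1$ only by the vanishing of a single $a\times a$ block; with these choices several block entries of $[A_0,B_1]+[A_1,B_0]$ vanish identically, so the degree-one equation imposes only $6a^2+4ab+b^2-1$ conditions, while every higher equation is bounded crudely by $n^2-1$ conditions (the $-1$ coming from the trace). Instead of proving the obstructions solvable, the paper bounds the dimension of the resulting closed set $W$ from below by (number of free parameters) minus (number of equations), and then enlarges it by simultaneous conjugation and scalar shifts, gaining a further $n^2-\dim C(A_0)+2$; the comparison with $(k+1)(n^2+n)$ then reduces to a quadratic inequality in $a,b$ that holds for all $n\ge N(k)$, together with the observation that the closure of this set is proper since its points have non-$1$-regular $t^0$-coefficient. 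To repair your argument you would need either to carry out an equally explicit choice and count for type $(2^m)$ (which the computation above suggests will fail), or to adopt a construction of the paper's kind.
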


\begin{proof}

Let $n=3a+b$, and write $n\times n$ matrices as $4\times 4$ block
matrices where the first 3 rows and columns are of dimension $a$
and the last ones are of dimension $b$.  Let $W$ be the closed set
of all pairs $(A(t),B(t))\in \cjs$ such that in block form
$$A_0=\left(
\begin{array}{cccc}0&I&0&0\\0&0&I&0\\0&0&0&0\\0&0&0&0
\end{array}
\right), B_0=\left(
\begin{array}{cccc}0&B_1^{\mysup 0}&B_2^{\mysup 0}&B_3^{\mysup 0}\\0&0&B_1^{\mysup 0}&0\\0&0&0&0\\
0&0&B_4^{\mysup 0}&0
\end{array}
\right),$$
$$A_1=\left(
\begin{array}{cccc}A_{11}^{\mysup 1}&A_{12}^{\mysup 1}&A_{13}^{\mysup 1}&A_{14}^{\mysup 1}\\
A_{21}^{\mysup 1}&A_{22}^{\mysup 1}&A_{23}^{\mysup 1}&A_{24}^{\mysup 1}\\
0&A_{32}^{\mysup 1}&A_{33}^{\mysup 1}&A_{34}^{\mysup
1}\\A_{41}^{\mysup 1}&A_{42}^{\mysup 1}&A_{43}^{\mysup
1}&A_{44}^{\mysup 1}
\end{array}
\right),B_1=\left(
\begin{array}{cccc}B_{11}^{\mysup 1}&B_{12}^{\mysup 1}&B_{13}^{\mysup 1}&B_{14}^{\mysup 1}\\
B_{21}^{\mysup 1}&B_{22}^{\mysup 1}&B_{23}^{\mysup 1}&B_{24}^{\mysup 1}\\
0&B_{32}^{\mysup 1}&B_{33}^{\mysup 1}&B_{34}^{\mysup
1}\\B_{41}^{\mysup 1}&B_{42}^{\mysup 1}&B_{43}^{\mysup
1}&B_{44}^{\mysup 1}
\end{array}
\right)$$ and $A_2,\ldots ,A_k,B_2,\ldots ,B_k\in M_n(F)$ are
arbitrary such that $[A(t),B(t)]=0$. The commutativity relation of
$A(t)$ and $B(t)$ is given by.

\begin{eqnarray}
\nonumber\left[A_0,B_0\right] &=& 0\\
\nonumber\left[A_0,B_1\right]+\left[A_1,B_0\right]&=& 0\\
\left[A_0,B_2\right]+\left[A_1,B_1\right]+\left[A_2,B_0\right]&=&0\label{eqn:SystemForB}\\
\nonumber\vdots &=& \vdots\\
\nonumber\left[A_0,B_k\right]+\left[A_1,B_{k-1}\right]+\cdots
+\left[A_k,B_0\right]&=&0
\end{eqnarray}

The first equation is automatically satisfied since $B_0$ has been
chosen to commute with $A_0$.  As for the second equation, note
that $\left[A_0,B_1\right] + \left[A_1,B_0\right]$ has zeros in
the entries $(2,1)$, $(3,1)$, $(4,1)$, $(3,2)$, and $(3,4)$, so
the second equation in the system of equations
(\ref{eqn:SystemForB}) can be described by $6a^2+4ab+b^2-1$
equations (the subtraction of $1$ is because the trace of $[X,Y]$
is zero, so the diagonal entries are always dependent).  Each of
the remaining equations is given by $n^2 -1$ equations. Therefore
$W$ has dimension at least
%
\begin{eqnarray*}
2a^2+2ab+2(8a^2+6ab+b^2) &+& 2(k-1)n^2 -(6a^2+4ab+b^2-1)\\
-(k-1)n^2+k-1 &=&12a^2+10ab+b^2+(k-1)n^2+k.
\end{eqnarray*}

Note that $Gl_n(F)$ acts on $\cjs$ by simultaneous conjugation on
each component $A_i$ and $B_j$. Let $V$ be the set of all
$(A',B')\in \cjs$ such that $A_0'$ is similar to $\lambda I+A_0$
for some $\lambda \in F$. Then $V$ contains the set $S= \{(\lambda
+ gAg^{-1}, \mu + gBg^{-1})\ |\ (A,B)\in W, \ \lambda, \mu \in F,\
g\in Gl_n(F)\}$. Let us denote by $C(A_0)$ the centralizer of
$A_0$ in $Gl_n(F)$. Then $C(A_0)$ consists of all invertible
matrices of the block form
\begin{eqnarray*}
\left(
\begin{array}{cccc}X&Y&Z&P\\0&X&Y&0\\0&0&X&0\\
0&0&Q&R
\end{array}
\right)
\end{eqnarray*}
If $\widetilde{W}$ is an irreducible component of $W$,
we have a map $f_{\widetilde{W}}\colon Gl_n(F) \times
\widetilde{W} \times F^2 \rightarrow S\subset V\subset \cjs$ that
takes $g, (A,B), \lambda, \mu$ to $(\lambda + gAg^{-1}, \mu +
gBg^{-1})$.  For any fixed $(\lambda' + g'A'g'^{-1}, \mu' +
g'B'g'^{-1})$ in the image, the fiber over this point is
parameterized by the set $g'C(A_0)$, and hence is of dimension
equal to  $\dim C(A_0)$. From this it follows that the image of
$f_{\widetilde{W}}$ has dimension $n^2-\dim C(A_0)+\dim
{\widetilde{W}}+2$, and hence the set
%
%
$S$, which is the union of the images of $f_{\widetilde{W}}$ as
${\widetilde{W}}$ ranges through the components of $W$, has
dimension   $n^2-\dim C(A_0)+\dim W+2$.  Thus,
\begin{eqnarray*}
\dim V&\ge& \dim(S)= n^2-\dim C(A_0)+\dim W+2\\
&\ge&(3a+b)^2-(3a^2+2ab+b^2)+12a^2+10ab+b^2+(k-1)n^2+k+2\\
&=&18a^2+14ab+b^2+k+2+(k-1)n^2 \end{eqnarray*} However, if $\cjs$
were irreducible, the  expected dimension from Section
\ref{secn:openset}  is $(k+1)(n^2+n)$, which we write as
$$2(n^2+n) + (k-1)(n^2+n) =
18a^2+12ab+2b^2+6a+2b+(k-1)n^2+(k-1)(3a+b)$$  Note that
$\overline{V}$ is proper subvariety of $\cjs$, since, for instance, the coefficient of $t^0$
of every element in $\overline{V}$ lies in the closed set of $M_n(F)$ where matrices are not $1$-regular.
Hence, it
suffices to find $a$ and $b$ such that
$$18a^2+14ab+b^2+k+2+(k-1)n^2\ge 18a^2+12ab+2b^2+6a+2b+(k-1)n^2+(k-1)(3a+b).$$
This is equivalent to $b^2+(k+1-2a)b+(k+1)3a-k-2\le 0$. The
discriminant of this quadratic polynomial $\disc=
(k+1-2a)^2-4\left((k+1)3a-k-2)\right)=4a^2-16(k+1)a+(k+1)^2+4(k+2)
$ must be nonnegative, and
\begin{equation} \label{eqn:ineq for b}
\frac{2a-k-1-\sqrt{\disc}}{2}\le b\le
\frac{2a-k-1+\sqrt{\disc}}{2}
\end{equation}
 Since
$\disc\ge 0$ we get that $a\ge 2(k+1)+
\frac{\sqrt{15(k+1)^2-4(k+2)}}{2}$ or $a\le
2(k+1)-\frac{\sqrt{15(k+1)^2-4(k+2)}}{2}$. Since
$\sqrt{15(k+1)^2-4(k+2)} > 3(k+1)$ for $k\ge 1$, we find in the
second case $2a<k+1$. But the definition of $\disc$ above shows
that $(k+1-2a)^2
> \disc$, so we find
\begin{equation} b\le
\frac{2a-k-1+\sqrt{\disc}}{2} <\frac{2a-k-1+|2a-k-1|}{2}=0
\end{equation}
which is a contradiction. So $a\ge \mu_k$, where we have written
$\mu_k$ for the ceiling function $\left\lceil
2(k+1)+\frac{\sqrt{15(k+1)^2-4(k+2)}}{2}\right\rceil$.
For each $n=3a+b$ such that $a\ge \mu_k$ and $b$ satisfies the
inequality (\ref{eqn:ineq for b}) above
$V$ does not have smaller dimension than the expected dimension of
$\cjs$, so $\cjs$ is reducible.  In particular, for $k=1$, we find
$\mu_k = 8$,  $a$ may be taken as $8$ and $b$ may be taken as $5$,
so $\cjones$ is reducible for $n=29$.

We will now look at the solutions for $n$ for a fixed $k$ and show
that for each $k\in \mathbb{N}$ there exists $N(k)$ such that
$\cjs$ is reducible if $n\ge N(k)$. We write the inequalities in
(\ref{eqn:ineq for b}) as
\begin{equation*}
a - \frac{k+1}{2} -\frac{\sqrt{\disc}}{2} \le b \le a -
\frac{k+1}{2} +\frac{\sqrt{\disc}}{2}
\end{equation*}
Adding $3a$ to all sides, we find
\begin{equation} \label{eqn:equation for n in terms of a}
4a - \frac{k+1}{2} -\frac{\sqrt{\disc}}{2} \le 3a+b=n \le 4a -
\frac{k+1}{2} +\frac{\sqrt{\disc}}{2}
\end{equation}

%


First, we remove the dependency of $\disc$ on $a$ by noting that
if $a\ge \beta_k = \left\lceil
2(k+1)+\frac{\sqrt{15(k+1)^2-4(k+1)+12}}{2} \right\rceil
> \mu_k$, then $\sqrt{\disc}\ge 4$.  Thus, we may narrow the
range for $n$ given by (\ref{eqn:equation for n in terms of a}) by
choosing $a\ge \beta_k > \mu_k$ to find
\begin{equation} \label{eqn:simplified eqn for n in terms of a}
4a- \frac{k+5}{2}\le n\le 4a-\frac{k-3}{2}
\end{equation}
Since the two terms on either side differ by $4$, there are four
integer values starting from the smallest $n = n_a,$ then $n_a+1,
n_a+2, n_a+3$, that satisfy the inequality above and for which
$\cjs$ will be reducible. Moreover, by replacing $a$ by $a+1$, we
find $\cjs$ will be reducible for $n_{a+1} = n_a+4, n_a+5, n_a+6,
n_a+7$, and so on. Hence, by taking $N(k) = \lceil 4\beta_k-
\frac{k+5}{2} \rceil$, we find that $\cjs $ will be reducible for
$n\ge N(k)$, with $n$ written as $3a+b$ and $a$ obtained by
solving (\ref{eqn:simplified eqn for n in terms of a}), along with
the original limit of $a\ge \beta_k$:
\begin{equation} \label{eqn:a in terms of n}
\frac{1}{4}\left(n + \frac{k-3}{2}\right) \le a \le
\frac{1}{4}\left(n + \frac{k+5}{2} \right)\quad \text{and}\ a \ge
\beta_k
\end{equation}
(If $k=1$, we recover our earlier value of $n=29$. If $k=2$, we
find $\beta_k = 12$ and $N(2) = 45$. However, working directly
with (\ref{eqn:ineq for b}), we find $\cjs$ is also irreducible
for $n=44$ when $k=2$.)


\end{proof}

\section {Irreducibility of $\js$ for $n=3$} \label{secn:small_irred}

We will show here that when $n=3$, the set $U$  is dense in
$\cjsthree$ for all $k$. This will immediately show that $\jsthree$ is
irreducible for all $k$. (Recall that $\jstwo$ is already known to be
irreducible for all $k$, see \cite{NS}.)

First, we need some general reduction results that hold for any
$n$. We will denote the closure of $U$ in $\cjs$ by $\Ubar$.

\begin{Lemma} \label{lemma:assume A0B0 one eval}
Assume that for all $m < n$, $\cjsm$ has been proven to be
irreducible.  Then, any point $(A=A(t),B=B(t)) \in \cjs$ where
$A_0$ or $B_0$ have at least two distinct eigenvalues is in
$\Ubar$.
\end{Lemma}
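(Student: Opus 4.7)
The plan is to use a distinct eigenvalue of $A_0$ (or $B_0$) to simultaneously block-decompose $(A(t),B(t))$ by a conjugation in $GL_n(F[t]/t^{k+1})$, reduce to smaller jet schemes via the inductive hypothesis, and then reassemble through an algebraic family of approximations.

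Assume first that $A_0$ has at least two distinct eigenvalues; the case where only $B_0$ does is handled identically, using $B_0$ to generate the idempotents. Pick an eigenvalue $\lambda$ of $A_0$ of multiplicity $m$ with $1\le m<n$, and let $\bar e \in F[A_0]$ be the idempotent projecting onto the generalized $\lambda$-eigenspace of $A_0$. Setting $R=F[t]/t^{k+1}$, the commutative ring $R[A(t)]$ surjects onto $F[A_0]$ via reduction mod $t$ with nilpotent kernel $tR[A(t)]$, so the standard lifting of idempotents across a nilpotent ideal produces an idempotent $e \in R[A(t)]$ lifting $\bar e$.

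Because $e\in R[A(t)]$, it commutes with $A(t)$, and it also commutes with $B(t)$ (which commutes with $A(t)$ and with scalars from $R$, hence with all of $R[A(t)]$). The decomposition $R^n = eR^n \oplus (1-e)R^n$ is therefore preserved by both $A(t)$ and $B(t)$, and each summand is a projective, hence free, $R$-module of the rank of its reduction mod $t$, namely $m$ and $n-m$. Choosing $R$-bases assembles into $g\in GL_n(R)$ (invertible since $g \bmod t$ gives a basis of $F^n$) such that
\[
g^{-1}A(t)g=\mathrm{diag}(A^{(1)}(t),A^{(2)}(t)),\quad g^{-1}B(t)g=\mathrm{diag}(B^{(1)}(t),B^{(2)}(t))
\]
with each $(A^{(i)}(t),B^{(i)}(t))$ a commuting pair in $M_{n_i}(R)$, where $n_1=m$ and $n_2=n-m$.

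By the inductive hypothesis $\mathcal{V}(\mathcal{C}\mysup{k}_{2,n_i})$ is irreducible, and hence by Corollary \ref{cor:irred of U} (its $1$-regular open set $U_{n_i}$ being nonempty) it equals $\overline{U_{n_i}}$. So each $(A^{(i)},B^{(i)})\in\overline{U_{n_i}}$. Let $W \subset \mathcal{V}(\mathcal{C}\mysup{k}_{2,n_1})\times\mathcal{V}(\mathcal{C}\mysup{k}_{2,n_2})$ be the open subset of pairs in $U_{n_1}\times U_{n_2}$ whose $A$-constant-terms have coprime characteristic polynomials. Both defining conditions are nonempty open, so $W$ is a nonempty (hence dense) open subset of the irreducible product, and $((A^{(1)},B^{(1)}),(A^{(2)},B^{(2)}))\in \overline W$. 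Pick an algebraic curve through this point with generic fiber in $W$. Its block-diagonal image in $\cjs$ has generic fiber with $A_0$ equal to a direct sum of two $1$-regular matrices with coprime characteristic polynomials -- hence $1$-regular -- placing the generic fiber in $U$, while its specialization at $s=0$ is $(g^{-1}A(t)g,g^{-1}B(t)g)$. Therefore $(g^{-1}A(t)g,g^{-1}B(t)g)\in\Ubar$, and since $\Ubar$ is invariant under the $GL_n(R)$-conjugation action on $\cjs$ ($1$-regularity of $A_0$ being a $GL_n(F)$-invariant condition on the reduction mod $t$), the original $(A(t),B(t))\in\Ubar$.

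The main technical step -- and likely the main obstacle -- is the simultaneous block decomposition: lifting $\bar e$ to an idempotent in $R[A(t)]$, establishing freeness of the two image submodules, and realizing the block-diagonalization via a conjugation by an element of $GL_n(R)$. The remainder is a fairly standard dense-open-plus-specialization argument built on the inductive hypothesis.
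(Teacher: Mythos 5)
Your argument is correct and follows the same overall strategy as the paper's---split $(A(t),B(t))$ into two smaller commuting pairs via the eigenvalue decomposition coming from $A_0$, invoke the inductive irreducibility of the smaller jet schemes, and approximate by pairs whose assembled constant term is $1$-regular---but both key technical steps are implemented differently. For the splitting, the paper works with $A$, $B$, $C=t$ as $n(k+1)\times n(k+1)$ matrices over $F$: it simultaneously block-diagonalizes them along the generalized eigenspaces of $A$ and then uses uniqueness of the Jordan form of $C$ to see that each block again lives in some $M_{n_i}(F)[t]/t^{k+1}$, i.e.\ gives a point of $\cjsu \times \cjsv$; you instead lift the spectral idempotent $\bar e\in F[A_0]$ to an idempotent of $R[A(t)]$ with $R=F[t]/t^{k+1}$, and use freeness of finitely generated projectives over the Artinian local ring $R$ to produce the conjugating $g\in GL_n(R)$---a more intrinsic, module-theoretic route to the same intermediate statement. (One small imprecision: the kernel of $R[A(t)]\to F[A_0]$ is $R[A(t)]\cap tM_n(R)$, which can be strictly larger than $tR[A(t)]$; it is nonetheless a nil ideal, every element being $t$ times a matrix, so idempotent lifting still applies.) For the closing density step, the paper exhibits a single explicit point of $U$ in the (irreducible, by induction) image $Z$ of the block-assembly map and concludes $Z\subseteq \Ubar$; you instead use the dense open $W\subseteq U_{n_1}\times U_{n_2}$ cut out by coprimality of the characteristic polynomials of the constant terms, whose image lies in $U$---here your curve is unnecessary, since continuity of the assembly morphism $\Phi$ already gives $\Phi(\overline W)\subseteq\overline{\Phi(W)}\subseteq\Ubar$---and then you appeal to $GL_n(R)$-conjugation invariance of $\Ubar$, which you justify correctly and which is essentially the paper's later lemma on automorphisms preserving $U$; the paper sidesteps that by building the conjugation by $H^{-1}$ directly into its map. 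Both versions are sound; yours buys a cleaner, coordinate-free splitting at the cost of an extra (easy) invariance step, while the paper's single-witness argument is slightly more economical.
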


\begin{proof}
Assume that $A_0$ has at least two distinct eigenvalues. Note that
the eigenvalues of $A $ as an $n(k+1)\times n(k+1)$ matrix and the
eigenvalues of $A_0$ coincide. This can be seen by writing $C=t$
in an alternative basis in the block form
\begin{equation*}
\left(
 \begin{array}{ccccc}
   0 & I & 0 & 0 & 0\\
   0 & 0 & I & 0 & 0\\
   \vdots & \vdots & \vdots & \ddots & \vdots\\
   0 & 0 & 0 & 0 & I \\
   0 & 0 & 0 & 0 & 0 \\
 \end{array}
\right)
\end{equation*}
in which each entry is $n\times n$, for which correspondingly, $A
$ has the form
\begin{equation*}
\left(
 \begin{array}{ccccc}
   A_0 & A_1 & A_2 & \dots & A_k\\
   0 &A_ 0 & A_1 & \dots & A_{k-1}\\
   \vdots & \vdots & \ddots & \ddots & \vdots\\
   0 & 0 & \dots & A_0 & A_1 \\
   0 & 0 & \dots & 0 & A_0 \\
 \end{array}
\right)
\end{equation*} It follows from elementary considerations that since $A$, $B$, and $C$
commute and since $A$ has more than one eigenvalue, there exists
an $H \in \text{Gl}_{n(k+1)}(F)$ such that
\begin{equation*}
HA H^{-1} = \left(
               \begin{array}{cc}
                 P_1 & 0 \\
                 0 & P_2 \\
               \end{array}
             \right),
             \quad
HB H^{-1} = \left(
               \begin{array}{cc}
                 Q_1 & 0 \\
                 0 & Q_2 \\
               \end{array}
             \right),
             \quad
HCH^{-1} = \left(
               \begin{array}{cc}
                 C_1 & 0 \\
                 0 & C_2 \\
               \end{array}
             \right)
             \quad
\end{equation*}
where the upper left block is $r\times r$ and the lower right
block is $s \times s$, for some $r\ge 1$ and appropriate $s\ge 1$.
Moreover, we may further adjust $H$ so that $C_1$ and $C_2$ are in
Jordan canonical form. Since the Jordan form of $C$ is unique, it
follows that $r = u(k+1)$ and $s = v(k+1)$ for appropriate $u$ and
$v$, and $C_1$ consists of $u$ copies of $J_{k+1}$ while $C_2$
consists of $v$ copies of $J_{k+1}$. Hence, $(P_1, Q_1)$ is a
point on $\cjsu$ and $(P_2, Q_2)$ is a point on $\cjsv$.

We thus have a map $\cjsu \times \cjsv \rightarrow \cjs$
\begin{equation*}
(X_1, Y_1), (X_2, Y_2) \mapsto \left( H^{-1} \left(
               \begin{array}{cc}
                 X_1 & 0 \\
                 0 & X_2 \\
               \end{array}
             \right) H, H^{-1} \left(
               \begin{array}{cc}
                 Y_1 & 0 \\
                 0 & Y_2 \\
               \end{array}
             \right) H \right)
\end{equation*}
Write $Z$ for the image of this map, and note that $(A,B)$ is in
$Z$ and that $Z$ is irreducible by assumption on $\cjsu$ and
$\cjsv$. It is enough to show that $U\cap Z \neq \varnothing$ to
conclude that $Z$ is contained in $\Ubar$, since $U\cap Z$ would
then be dense in $Z$. But for this, take $X_1 = X_1(t)$ to be
$X_{1,0} + 0t + \cdots + 0t^{k}$ where $X_{1,0}$ is a $u\times u$
diagonal matrix with distinct diagonal entries $(\lambda_1, \dots,
\lambda_u)$, and $Y_1 = 0$. Similarly take $X_2 = X_2(t)$ to be
$X_{2,0} + 0t + \cdots + 0t^{k}$ where $X_{2,0}$ is a $v\times v$
diagonal matrix with distinct diagonal entries $(\lambda_{u+1},
\dots, \lambda_n)$, all distinct from those of $X_{1,0}$, and $Y_2
= 0$. Then, writing $H^{-1} \left(
               \begin{array}{cc}
                 X_1 & 0 \\
                 0 & X_2 \\
               \end{array}
             \right) H$ as $A_0 + A_1t + \cdots A_kt^k$, it is easy
to see that $A_0$ will have the distinct eigenvalues $(\lambda_1,
\dots, \lambda_n)$, so $A_0$ is $1$-regular.  Hence $U\cap Z$ is
nonempty.

\end{proof}

The next result is trivial but will be very useful.  First, write
$U'$ for the corresponding open subset of $\cjs$ where $B_0$ is
$1$-regular, then $U'$ is also irreducible by symmetric arguments.
Since $U\cap U'$ is nonempty, it follows immediately that $\Ubar =
\overline{U'} = \overline{U\cap U'}$.
\begin{Lemma} \label{lemma:prove closure with automorphisms}
Let $f$ be an automorphism of $\cjs$ such that $f(U) = U$ or
$f(U') = U'$ or  $f(U\cap U') = U\cap U'$. Then $(A,B)$ is in
$\Ubar$ iff $f(A ,B)$ is in $\Ubar$.
\end{Lemma}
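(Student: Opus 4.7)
The plan is to exploit the fact that an automorphism of $\cjs$ is, in particular, a homeomorphism of the underlying topological space in the Zariski topology, so it must commute with closure: for any subset $S \subseteq \cjs$ we have $f(\overline{S}) = \overline{f(S)}$. Everything in the lemma will be an immediate consequence of this single observation together with the identification of $\Ubar$ recorded in the paragraph preceding the lemma.

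First I would record that the paragraph preceding the lemma establishes $\Ubar = \overline{U'} = \overline{U \cap U'}$; the key input there is that $U \cap U'$ is nonempty (one can exhibit a point, e.g.\ take $A_0$ and $B_0$ to be distinct diagonal matrices each with distinct diagonal entries, with $A_i = B_i = 0$ for $i \ge 1$, so that both $A_0$ and $B_0$ are $1$-regular), whence the irreducibility of $\Ubar$ forces the three closures to coincide. Then the three hypotheses of the lemma all say that $f$ stabilizes one of three subsets whose closure in $\cjs$ is $\Ubar$.

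The main step is then a one-line application of $f(\overline{S}) = \overline{f(S)}$. For instance, if $f(U \cap U') = U \cap U'$, then
\begin{equation*}
f(\Ubar) = f\bigl(\overline{U \cap U'}\bigr) = \overline{f(U \cap U')} = \overline{U \cap U'} = \Ubar,
\end{equation*}
and the same computation works verbatim with $U \cap U'$ replaced by $U$ or by $U'$. Since $f$ is a bijection of $\cjs$ with itself that carries $\Ubar$ onto $\Ubar$, we conclude $(A,B) \in \Ubar$ iff $f(A,B) \in \Ubar$.

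There is essentially no obstacle; the only thing to be careful about is to invoke the correct closure identity in the $U'$ and $U \cap U'$ cases, which is precisely why the remark that $\Ubar = \overline{U'} = \overline{U \cap U'}$ is placed immediately before the lemma.
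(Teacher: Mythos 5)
Your proof is correct and is exactly the argument the paper has in mind (the paper states the lemma without proof, calling it trivial, relying precisely on the identity $\Ubar = \overline{U'} = \overline{U\cap U'}$ from the preceding paragraph): an automorphism of $\cjs$ is a Zariski homeomorphism, so it commutes with closure, and stabilizing any one of $U$, $U'$, $U\cap U'$ forces $f(\Ubar)=\Ubar$, giving the stated equivalence.
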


Since $(A,B) \mapsto (A-\lambda I, B-\mu I)$ is such an
automorphism, we find:

\begin{Corollary} \label{cor:assume A0B0 nilpotent} Let $(A,B)\in \cjs$ be such that $A$ has the unique
eigenvalue $\lambda$ and $B$ has the unique eigenvalue $\mu$. Then
$(A,B)$ is in  $\Ubar$ if and only if $(A-\lambda I, B-\mu I)$ is
in
 $\Ubar$.
\end{Corollary}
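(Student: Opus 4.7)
The proof is essentially a direct appeal to Lemma \ref{lemma:prove closure with automorphisms}, so the plan is to exhibit the relevant automorphism and verify its hypotheses.

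First I would fix the scalars $\lambda$ and $\mu$ appearing in the hypothesis and define the map $f \colon \cjs \to \cjs$ by
\[
f(A(t), B(t)) = (A(t) - \lambda I, B(t) - \mu I),
\]
which concretely subtracts $\lambda I$ from $A_0$ and $\mu I$ from $B_0$ while leaving the higher coefficients $A_1,\dots,A_k$ and $B_1,\dots,B_k$ untouched. Since $[A(t) - \lambda I, B(t) - \mu I] = [A(t),B(t)]$ in $M_n(F)[t]/t^{k+1}$, the map $f$ sends commuting pairs to commuting pairs, and it is visibly invertible (its inverse is translation by $(\lambda I, \mu I)$), so $f$ is an automorphism of $\cjs$.

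Next I would verify that $f(U) = U$. By definition, $U$ consists of commuting pairs $(A(t),B(t))$ with $A_0$ being $1$-regular; equivalently (by the characterizations recalled in Section \ref{secn:openset}), $\dim_F F[A_0] = n$. The $A_0$-coefficient of $f(A(t),B(t))$ is $A_0 - \lambda I$, and since $F[A_0 - \lambda I] = F[A_0]$ as $F$-subalgebras of $M_n(F)$, the matrix $A_0 - \lambda I$ is $1$-regular if and only if $A_0$ is. Thus $f$ restricts to a bijection $U \to U$, and hypotheses of Lemma \ref{lemma:prove closure with automorphisms} are satisfied.

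Finally, applying Lemma \ref{lemma:prove closure with automorphisms} to this $f$ gives that $(A,B) \in \Ubar$ if and only if $f(A,B) = (A - \lambda I, B - \mu I) \in \Ubar$, which is exactly the statement of the corollary. There is no real obstacle here; the only thing worth emphasizing explicitly is the $1$-regularity check in the verification that $f(U) = U$, since everything else is immediate from the algebraic form of $f$.
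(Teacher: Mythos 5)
Your proof is correct and is exactly the argument the paper uses: the paper deduces the corollary by observing that $(A,B)\mapsto(A-\lambda I,B-\mu I)$ is an automorphism preserving $U$ and then invoking Lemma \ref{lemma:prove closure with automorphisms}. Your write-up merely makes the $1$-regularity check explicit, which matches the paper's intent.
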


As a result of Lemma \ref{lemma:assume A0B0 one eval} and
Corollary \ref{cor:assume A0B0 nilpotent} above, we may assume
that $A$ and $B$ are nilpotent, and since the eigenvalues of $A$
and $A_0$, and $B$ and $B_0$, coincide respectively, we may assume
that $A_0$ and $B_0$ are nilpotent while proving that
$(A(t),B(t))\in \Ubar$.  We will also need the following
reductions:

\begin{Corollary} \label{cor:can modify AB by polynomials} Let
$p(t)$ and $q(t)$ be polynomials in $F[t]$ of degree at most $k$,
and assume that $q(0) = 0$.  Then $(A(t),B(t)) \in \Ubar$ iff any
of the following occur:
\begin{enumerate} \item  $(B(t),A(t)) \in \Ubar$
\item $(A(t) + p(t)I, B(t)) \in \Ubar$
\item $(A(t), B(t) + p(t)I) \in \Ubar$
\item $(A(t), B(t) + p(t)A(t)) \in \Ubar$ \item $(A(t)(1 + q(t)), B(t))
\in \Ubar$ \end{enumerate}
\end{Corollary}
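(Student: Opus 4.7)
The strategy is to exhibit, for each of the five operations, an automorphism of $\cjs$ to which Lemma \ref{lemma:prove closure with automorphisms} applies, so that passing between the original pair and the modified pair preserves membership in $\Ubar$. In every case I need to check two things: (i) the map is an $F$-scheme automorphism of $\cjs$, i.e.\ it sends commuting pairs in $M_n(F)[t]/t^{k+1}$ to commuting pairs and has a two-sided inverse of the same form, and (ii) the map preserves $U$, or $U'$, or $U\cap U'$, i.e.\ it preserves the $1$-regularity of the appropriate constant term.

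For item (1), the swap $(A,B)\mapsto (B,A)$ is an involutive automorphism of $\cjs$ that interchanges $U$ and $U'$, hence preserves $U\cap U'$. For items (2) and (3), adding $p(t)I$ to one coordinate commutes with everything central, so commutativity is preserved, and the inverse is subtraction of $p(t)I$; the new constant term differs from the original by the scalar $p(0)I$, which does not change $1$-regularity of $A_0$ (respectively $B_0$), so $U$ (respectively $U'$) is preserved. For item (4), the map $(A,B)\mapsto (A, B + p(t)A)$ has inverse $(A,B)\mapsto (A,B-p(t)A)$; commutativity of $A$ with $B+p(t)A$ follows from $[A,p(t)A]=0$, and the $A$-coordinate (hence $A_0$) is left untouched, so $U$ is preserved. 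For item (5), the condition $q(0)=0$ makes $1+q(t)$ a unit in $F[t]/t^{k+1}$; the map $(A,B)\mapsto (A(1+q(t)),B)$ is therefore invertible, commutes with $B$ because $1+q(t)$ lies in the center $F[t]/t^{k+1}$, and its constant term is $A_0\cdot (1+q(0))=A_0$, so again $U$ is preserved.

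After these checks, Lemma \ref{lemma:prove closure with automorphisms} applied in turn to each of the five automorphisms yields the stated equivalences.

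No single step is genuinely hard; the only point that requires a moment's care is item (5), where one must use $q(0)=0$ both to ensure that $1+q(t)$ is invertible in $F[t]/t^{k+1}$ (so the map is an automorphism rather than merely a morphism) and to ensure that the new degree-$0$ coefficient equals $A_0$ unchanged. A secondary minor check is that each of these operations, viewed pointwise, actually descends to an automorphism of the scheme $\cjs$ and not only of the closed-point set $\cjs$ — but all five maps are given by polynomial substitutions in the coordinates $x^{(s)}_{i,j}, y^{(s)}_{i,j}$ with polynomial inverses, so this is automatic.
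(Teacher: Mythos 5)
Your proposal is correct and follows the same approach as the paper: in each case you exhibit an explicit automorphism of $\cjs$ and verify that it preserves $U$, $U'$, or $U\cap U'$, then invoke Lemma \ref{lemma:prove closure with automorphisms}. The paper's proof is more terse (it simply states that ``the obvious maps $f$ satisfy $f(U)=U$'' for items (2)--(4)), but the substance, including the careful handling of item (5) via invertibility of $1+q(t)$ in $F[t]/t^{k+1}$ and preservation of $A_0$, is identical.
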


\begin{proof}
For the first assertion, note that $f\colon (A,B) \mapsto (B,A)$
is an automorphism of $\cjs$ that satisfies $f(U\cap U') = U\cap
U'$. Thus, Lemma \ref{lemma:prove closure with automorphisms}
applies.  The second, third and fourth assertions also follow from
Lemma \ref{lemma:prove closure with automorphisms} because the
obvious maps $f$ satisfy $f(U)=U$.  For the fifth assertion, for
fixed $q(t)$ with $q(0)=0$, note that $1+q(t)$ is invertible in
$F[t]/t^{k+1}$, with inverse $1-q'(t)$ for some $q'(t)$ with
$q'(0)=0$.  It follows that the map $f\colon (A(t),B(t)) \mapsto
(A(t)(1 +q(t)), B(t))$ is an automorphism of $\cjs$ in which the
coefficient of $t^0$ of $A(t)(1+q(t))$ is also $A_0$.  Hence $f(U)
= U$, so Lemma \ref{lemma:prove closure with automorphisms}
applies once again.
\end{proof}

Finally, we have the following reduction:
\begin{Corollary} \label{cor:assume A0B0 neq 0} Assume that
$(A(t),B(t))\in\Ubar$ whenever $A_0$ or $B_0$ is nonzero.  Then
$\Ubar = \cjs$.
\end{Corollary}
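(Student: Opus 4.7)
The plan is to observe that the only case of $(A(t),B(t)) \in \cjs$ not already handled by the hypothesis is when both $A_0 = 0$ and $B_0 = 0$, and to dispatch this case by a trivial scalar translation, reducing it to a pair whose constant term is nonzero.

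So the first step is to fix $(A(t),B(t)) \in \cjs$ with $A_0 = 0$ and $B_0 = 0$, and pick any nonzero scalar $\epsilon \in F$. Then $A(t) + \epsilon I$ still commutes with $B(t)$, since $\epsilon I$ is central, so $(A(t) + \epsilon I, B(t)) \in \cjs$, and the coefficient of $t^0$ of $A(t) + \epsilon I$ is $\epsilon I \neq 0$. By the standing hypothesis applied to this translated pair, $(A(t) + \epsilon I, B(t)) \in \Ubar$. The next step is to apply Corollary \ref{cor:can modify AB by polynomials}(2) with the constant polynomial $p(t) = \epsilon$ (of degree at most $k$), which yields $(A(t), B(t)) \in \Ubar$, as required. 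Combined with the hypothesis, every point of $\cjs$ lies in $\Ubar$, so $\Ubar = \cjs$.

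There is no real obstacle here: the scalar-translation automorphism of $\cjs$ has already been packaged in Corollary \ref{cor:can modify AB by polynomials}, and the reduction is purely formal. One could equivalently phrase the argument via closure: the morphism $\mathbf{A}^1 \to \cjs$ sending $\epsilon \mapsto (A(t) + \epsilon I, B(t))$ has closed preimage of $\Ubar$, and by hypothesis that preimage contains the dense open $\mathbf{A}^1 \setminus \{0\}$, hence all of $\mathbf{A}^1$, in particular the point $\epsilon = 0$.
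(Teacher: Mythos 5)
Your proof is correct as a proof of the literal statement, but it takes a genuinely different route from the paper's. You dispose of a pair with $A_0=B_0=0$ by passing to the scalar translate $(A(t)+\epsilon I,\,B(t))$, whose constant coefficient $\epsilon I$ is nonzero, invoking the hypothesis there, and then undoing the translation via Corollary \ref{cor:can modify AB by polynomials}(2) (or, equivalently, degenerating $\epsilon\to 0$ inside the closed set $\Ubar$). The paper instead stays with $A_0=B_0=0$: it puts $(0,0)$ in $\Ubar$ by scaling a point of $U$ to zero, and for $A(t)\neq 0$ it takes the first nonzero coefficient $A_r$, sets $A'(t)=A_r+A_{r+1}t+\cdots+A_kt^{k-r}$, observes that $A'$ commutes with $A$, so that $(A,\,B+\lambda A')\in\cjs$ has nonzero second constant coefficient $\lambda A_r$ and hence lies in $\Ubar$ by hypothesis for $\lambda\neq 0$, and then lets $\lambda\to 0$. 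Your argument is shorter and uniform (no case split on whether $A(t)=0$, no separate treatment of the zero pair), and it is a valid proof of the conditional statement as written. The trade-off is that you consume the hypothesis at a scalar translate of the very pair in question, i.e.\ at a pair which the normalization of Corollary \ref{cor:assume A0B0 nilpotent} sends straight back to the case $A_0=B_0=0$; in the proof of the $n=3$ theorem the hypothesis is only ever certified (via Lemma \ref{lemma:assume A0B0 one eval}, the swap, and the nonzero-nilpotent case) for pairs that are not such scalar translates, so in that context the instance you need is essentially as hard as the conclusion. The paper's version feeds the hypothesis a genuinely different pair, whose new nonzero constant coefficient is the leading coefficient $A_r$ of $A$, which is why it is the form of the argument that actually plugs into the application.
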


\begin{proof}
Take arbitrary $(A(t),B(t))  \in \cjs$ with $A_0 = B_0 = 0$.
First, for any $(P,Q) \in U$, $(\lambda P, \lambda Q)$ is also in
$U$ for nonzero $\lambda \in F$, hence, taking $\lambda = 0$, we
find $(0,0) \in \Ubar$.  Now assume that $A(t)$ is nonzero, and
assume that $A_0$, $\dots$, $A_{r-1}$ are zero for some $r\ge 1$
but $A_r$ is nonzero. Then writing $A'(t)$ for $A_r + t A_{r+1} +
\cdots t^{k-r} A_k$, we find $A'$ and $A$ commute, so for nonzero
$\lambda \in F$, $(A, B + \lambda A') \in \Ubar$ by the
hypothesis, so taking $\lambda=0$, we find $(A,B) \in \Ubar$.
\end{proof}

We now prove:
\begin{Theorem}
$\jsthree$ is irreducible.
\end{Theorem}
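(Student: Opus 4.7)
I aim to prove $\cjsthree \subseteq \Ubar$, which combined with Corollary \ref{cor:irred of U} yields irreducibility of $\jsthree$. Starting from an arbitrary $(A(t), B(t)) \in \cjsthree$, I chain together the reductions already established: Lemma \ref{lemma:assume A0B0 one eval} (invoking the base case $n=2$ from \cite{NS}), Corollary \ref{cor:assume A0B0 nilpotent}, Corollary \ref{cor:assume A0B0 neq 0}, and the normalizations in Corollary \ref{cor:can modify AB by polynomials}. After these, I may assume $A_0$ and $B_0$ are both nilpotent in $M_3(F)$ with $A_0 \neq 0$ (the swap in Corollary \ref{cor:can modify AB by polynomials}(1) makes this symmetric). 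The only nonzero nilpotent Jordan types in $M_3(F)$ are $J_2 \oplus 0$ and $J_3$; since $J_3$ is $1$-regular (placing the pair already in $U$), I am reduced to the situation where $A_0$ is conjugate to $E_{12}$ and $B_0$ is either zero or also conjugate to $E_{12}$.

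Choose a basis with $A_0 = E_{12}$. A direct computation shows that the nilpotent matrices in the centralizer of $E_{12}$ have the form $b E_{12} + c E_{13} + d E_{32}$, with $cd = 0$ required so that $B_0$ is not of Jordan type $J_3$. Corollary \ref{cor:can modify AB by polynomials}(4) applied with the constant $p = -b$ kills the $E_{12}$-component of $B_0$, and a further symmetric reduction (using Corollary \ref{cor:can modify AB by polynomials}(1) together with conjugation by the permutation swapping the first and third basis vectors) brings me to two canonical subcases: $B_0 = 0$, and $B_0 = c E_{13}$ with $c \neq 0$. For each of these normal forms, I then construct an explicit one-parameter family $(\tilde A(\epsilon, t), \tilde B(\epsilon, t)) \in \cjsthree$ with $(\tilde A(0,t), \tilde B(0,t)) = (A(t), B(t))$ and with $\tilde A(\epsilon, t)|_{t=0}$ rendered $1$-regular for $\epsilon \neq 0$ by a diagonal $\epsilon$-perturbation of $A_0$ (e.g.\ adding $\epsilon \cdot \mathrm{diag}(0, \lambda_1, \lambda_2)$ for generic $\lambda_1, \lambda_2$). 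The essential input is Theorem \ref{theorem:characterization of A(t) with A0 1-regular} Part (\ref{theorem:enum: inductive_form_of_B(t)}): once $\tilde A(\epsilon, t)|_{t=0}$ is $1$-regular, commuting $\tilde B$ is explicitly described in terms of free polynomials $q_0, \dots, q_k$ of degree $<3$, and the desired family is obtained by taking the $q_j$ to be polynomials in $\epsilon$.

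The main obstacle is a dimension jump: the centralizer of a $1$-regular matrix in $M_3(F)$ is $3$-dimensional, whereas the centralizer of $A_0 = E_{12}$ is $5$-dimensional. Consequently, a naive $\epsilon$-perturbation of $A_0$ alone cannot produce a family whose $\epsilon = 0$ limit recovers an arbitrary commuting $B(t)$, since the limits of $1$-regular centralizers fill only a proper subspace of the centralizer of $E_{12}$. To compensate, the perturbation must involve all coefficients $A_1, \dots, A_k$ and $B_0, B_1, \dots, B_k$, chosen so that the inhomogeneous terms in the commutativity equations at orders $t, t^2, \dots, t^k$ cancel to the correct order in $\epsilon$ (i.e.\ without introducing negative powers of $\epsilon$ in the polynomial parametrization). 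Executing this polynomial-in-$\epsilon$ bookkeeping, case by case in the two canonical subcases, constitutes the technical heart of the proof.
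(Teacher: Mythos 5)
Your reduction phase matches the paper's: you invoke Lemma \ref{lemma:assume A0B0 one eval} (with the $n\le 2$ cases), Corollary \ref{cor:assume A0B0 nilpotent}, Corollary \ref{cor:assume A0B0 neq 0} and Corollary \ref{cor:can modify AB by polynomials} to reach $A_0$ nilpotent, nonzero, non-regular, hence conjugate to $E_{12}$, exactly as in the paper. But the core of your argument --- the construction of a commuting family $(\tilde A(\epsilon,t),\tilde B(\epsilon,t))$ specializing to $(A(t),B(t))$ at $\epsilon=0$ and lying in $U$ for $\epsilon\ne 0$ --- is not carried out, and this is precisely where the difficulty lives. Indeed, your own ``dimension jump'' remark is the obstruction: for $\epsilon\ne 0$ your $\tilde B$ is forced by Theorem \ref{theorem:characterization of A(t) with A0 1-regular}(\ref{theorem:enum: inductive_form_of_B(t)}) to lie in $F[\tilde A,t]$, so the $\epsilon\to 0$ limits sweep out exactly (a subset of) $\Ubar$; asserting that \emph{every} commuting $B(t)$ over the stratum $A_0=E_{12}$ arises as such a limit is essentially a restatement of the claim $(A,B)\in\Ubar$ for that stratum, not a proof of it. No evidence is given that the ``polynomial-in-$\epsilon$ bookkeeping'' can be made to work, and it is not a routine verification. (There is also a smaller slip: the stated symmetry --- swapping $A$ and $B$ and conjugating by the permutation exchanging the first and third basis vectors --- sends $(E_{12},dE_{32})$ to a pair with $B_0$ a multiple of $E_{32}$, not of $E_{13}$, so your two ``canonical subcases'' do not yet cover the stratum without an extra move such as simultaneous transposition.)

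The paper closes this gap by a different, and weaker-target, deformation: after normalizing $a=a'=0$, $b=1$, $b'=0$ via Corollary \ref{cor:can modify AB by polynomials}, it solves the commutation equations for $d',e',f',g'$ and writes down explicit $t$-dependent matrices $X$ (with constant diagonal $(0,1,1)$) and $Y$ such that $[X,Y]=0$ and $AY+XB=BX+YA$, so that $(A+\lambda X, B+\lambda Y)$ commutes for \emph{all} $\lambda$. The point is that one does not need $(A+\lambda X)_0$ to become $1$-regular: for generic $\lambda$ the matrix $A+\lambda X$ merely has two distinct eigenvalues, which suffices to place the deformed pair in $\Ubar$ by Lemma \ref{lemma:assume A0B0 one eval} together with the irreducibility of $\jstwo$, and closedness of $\Ubar$ finishes at $\lambda=0$. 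This sidesteps your dimension-jump problem entirely. As it stands, your proposal reproduces the reductions but is missing the key construction, so the proof does not go through.
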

\begin{proof}
It is sufficient to show that $\cjsthree$ is irreducible.  For
this, it is sufficient, thanks to the various reductions above and
the irreducibility of $\jstwo$, to show that given $(A(t), B(t))
\in \cjsthree$ with $A_0$ nonzero and nilpotent, the pair $(A,B)$
is in $\Ubar$. By conjugating each of $A_i$ and $B_j$ with a fixed
$H \in \text{Gl}_n(F)$ we may assume (by Lemma \ref{lemma:prove
closure with automorphisms}) that $A_0$ is in Jordan form. If $A_0$
is $1$-regular we are done, so we may assume that $A_0$ is the
matrix $e_{1,2}$ with $1$ in the $(1,2)$ slot and zeros elsewhere.
We adopt the following notation for the entries: \begin{equation*}
A=\left(
\begin{array}{ccc}a(t)&b(t)&c(t)\\d(t)&e(t)&f(t)\\g(t)&h(t)&i(t)
\end{array}
\right), \quad B=\left(
\begin{array}{ccc}a'(t)&b'(t)&c'(t)\\d'(t)&e'(t)&f'(t)\\g'(t)&h'(t)&i'(t)
\end{array}
\right)
\end{equation*}

We may replace $A(t)$ by $A(t) - a(t)I$ and then $B(t)$ by $B(t) -
a'(t)I$ (Corollary \ref{cor:can modify AB by polynomials}), so we
can assume that $a(t)=a'(t)=0$. Since $b(0) = 1$, $b(t)$ is
invertible, with inverse of the form $1 + \text{higher terms}$, we
may replace $(A(t),B(t))$ by $(A (t)b(t)^{-1}, B(t))$ by Corollary
\ref{cor:can modify AB by polynomials}.  This does not change
$A_0$, but sets $b(t)=1$.  Next, we may replace $(A(t),B(t))$ by
$(A(t),B(t) - b'(t)A(t))$ and assume that $b'(t)=0$. The
commutativity relation $AB=BA$ implies that
$$d'(t)=c'(t)g(t)-c(t)g'(t),\quad e'(t)=c'(t)h(t)-c(t)h'(t),\quad f'(t)=c'(t)i(t)-c(t)i'(t)$$
and
$$g'(t)=i(t)h'(t)-i'(t)h(t)+c'(t)h(t)^2-c(t)h(t)h'(t)-e(t)h'(t).$$
We define the matrices
$$X=\left(
\begin{array}{ccc}0&0&0\\i(t)-e(t)-c(t)h(t)&1&0\\-h(t)&0&1
\end{array}
\right)\quad \mathrm{and}\quad Y=\left(
\begin{array}{ccc}0&0&0\\-h(t)c'(t)&0&c'(t)\\0&0&0
\end{array}
\right).$$ Then the above equations imply that $X$ and $Y$ commute
and $AY+XB=BX+YA$, so the matrices $A+ \lambda X$ and $B+\lambda
Y$ commute for each scalar $\lambda \in F$. For generic $\lambda$
the matrix $A+\lambda X$ has at least two distinct eigenvalues, so
for generic $\lambda \in F$ the pair $(A+\lambda X,B+\lambda Y)$
belongs to $\Ubar$ by Lemma \ref{lemma:assume A0B0 one eval} and
the irreducibility of $\jstwo$. Hence $(A,B)\in \Ubar$ and we are
done.

\end{proof}

We have the immediate corollary:
\begin{Corollary} \label{cor: algebra dimension 3(k+1)} Let   $N = 3(k+1)$
for $k=1,\dots$, and let $C\in M_{N}(F)$ be of the form $\lambda I
+ C'$, where $\lambda\in F$ and $C'$ is similar to the $N \times
N$ matrix consisting of three copies of $J_{k+1}$ along the
diagonal.  Then for any $A, B\in M_{N}(F)$ such that $A$, $B$, and
$C$ all commute, we have $\dim_F(F[A,B,C]) \le N$.
\end{Corollary}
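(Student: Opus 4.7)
My plan is to reduce the statement to a dimension computation on $\cjsthree$, pin down the subalgebra dimension on the distinguished open set $U$ using Theorem \ref{theorem:characterization of A(t) with A0 1-regular}, and then extend that bound to every closed point using lower semicontinuity together with the irreducibility of $\cjsthree$ just established.

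First, since $\lambda I$ is central, $F[A,B,C]=F[A,B,C']$, so I may assume $\lambda=0$ and $C=C'$.  Conjugating the triple by a suitable element of $\mathrm{GL}_N(F)$ (which preserves the dimension of any generated subalgebra), I may further assume $C'$ is the standard block-diagonal matrix with three copies of $J_{k+1}$.  As recalled in Section \ref{secn:intro}, its centralizer in $M_N(F)$ is then identified with $M_3(F)[t]/t^{k+1}$, with $C'$ corresponding to $t$; thus $(A,B)$ becomes a closed point $(A(t),B(t))$ of $\cjsthree$, and $F[A,B,C'] = F[A(t),B(t),t]$ computed inside $M_3(F)[t]/t^{k+1}$.

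Second, on the open subset $U \subset \cjsthree$ where $A_0$ is $1$-regular, Theorem \ref{theorem:characterization of A(t) with A0 1-regular}, Part (\ref{theorem_enum:F[A,t] has full dimension}), gives $\dim_F F[A(t),t]=3(k+1)=N$, while Part (\ref{theorem:enum: B(t) is poly in A and t}) gives $B(t) \in F[A(t),t]$.  Hence on $U$ the subalgebra $F[A(t),B(t),t]$ coincides with $F[A(t),t]$ and has dimension exactly $N$.

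Finally, by Cayley--Hamilton there is a uniformly bounded finite list of (commuting) words in $A(t),B(t),t$ that spans $F[A(t),B(t),t]$ at every closed point, so $\dim_F F[A(t),B(t),t]$ is the rank of a matrix of size depending only on $N$ whose entries are polynomials in the entries of $A$ and $B$.  Rank being lower semicontinuous, the locus $\{\dim_F F[A(t),B(t),t]\ge N+1\}$ is open in $\cjsthree$.  Since $\cjsthree$ is irreducible by the preceding theorem, any two nonempty opens meet; if this locus were nonempty it would intersect $U$, contradicting the previous paragraph.  Thus $\dim_F F[A,B,C] \le N$ at every closed point, as claimed.  The step I expect to need the most care is the uniform spanning one: one must exhibit a single finite list of words whose rank genuinely computes the subalgebra dimension at every point of $\cjsthree$ and varies lower-semicontinuously in $(A,B)$, so that the generic bound on $U$ really does propagate to the whole variety.
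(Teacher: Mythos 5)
Your proof is correct and follows essentially the same route as the paper: reduce to a point of $\cjsthree$, use Theorem \ref{theorem:characterization of A(t) with A0 1-regular} to get the bound on the open set $U$, and propagate it by irreducibility since $\dim_F(F[A,B,C])\le N$ is a closed condition. Your Cayley--Hamilton/rank-semicontinuity paragraph is simply a fleshed-out justification of the paper's remark that this is ``a closed set condition,'' so there is nothing further to fix.
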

\begin{proof}
The proof is standard, given the irreducibility of $\cjsthree$. We
may assume $\lambda = 0$ and $C'$ is in Jordan form since addition
of scalars and simultaneous conjugation does not change algebra
dimension.
On the open set $U$, $F[A,B,C] = F[A,C]$ by Theorem
\ref{theorem:characterization of A(t) with A0 1-regular}, Part
\ref{theorem:enum: B(t) is poly in A and t}. By classical results,
$\dim_F(F[A,C]) \le N$.  (In fact, by Theorem
\ref{theorem:characterization of A(t) with A0 1-regular}, Part
\ref{theorem_enum:F[A,t] has full dimension} $\dim_F(F[A,C]) = N$
on this open set.)
%
%
Since $\dim_F(F[A,B,C]) \le N$ is a closed set condition, it holds
on all of $\Ubar$. But the nonempty open set $U$ is dense in
$\cjsthree$ because of the irreducibility of $\cjsthree$. Hence
$\Ubar= \cjsthree$ and the theorem follows.
\end{proof}

\section{A Special Result when $\dim_F(F[A_0,B_0])=n$} We may combine the ideas in $\S2.2$ and in the proofs
of Theorem \ref{theorem:characterization of A(t) with A0
1-regular} and Corollary \ref{cor: algebra dimension 3(k+1)} to
establish the following:
\begin{Proposition} Let $A(t) = A_0 + A_1 t + \cdots + A_k t^k $ and $B(t) = B_0 + B_1
t + \cdots + B_k t^k$ commute in $M_n(F)[t]/t^{k+1}$, and assume
that $\dim_F(F[A_0,B_0])=n$.  Then viewing $M_n(F)[t]/t^{k+1}
\subset M_{n(k+1)}(F)$ as usual with $t$ corresponding to the
matrix $C$ consisting of $n$ copies of $J_{k+1}$ along the
diagonal, we have $\dim_F(F[A,B,C]) = n(k+1)$.

\end{Proposition}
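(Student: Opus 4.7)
The plan is to mimic the filtration argument from the proof of Theorem \ref{theorem:characterization of A(t) with A0 1-regular}, replacing the self-centralizer of $F[A_0]$ (used there via $A_0$ being $1$-regular) with the self-centralizer of $F[A_0, B_0]$ that \cite[Theorem 1.1]{NeSa} supplies under the hypothesis $\dim_F F[A_0,B_0]=n$. Writing $E := F[A,B,C] \subseteq M_n(F)[t]/t^{k+1}$, the goal is to pin down $\dim_F E$ on the nose by understanding the graded pieces of its $t$-adic filtration.

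First I would set up the filtration $E = E_0 \supseteq E_1 \supseteq \cdots \supseteq E_k \supseteq E_{k+1}=0$ where $E_i = E \cap t^i M_n(F)[t]/t^{k+1}$, and introduce the leading-coefficient map $\mathrm{lc}\colon E_i/E_{i+1} \hookrightarrow M_n(F)$, which sends the class of $X_i t^i + X_{i+1} t^{i+1} + \cdots$ to $X_i$; this is a well-defined $F$-linear injection. To prove the proposition it then suffices to show $\dim_F E_i/E_{i+1} = n$ for each $0 \le i \le k$.

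For the upper bound I would use commutativity: since every element $X \in E$ satisfies $XA=AX$ and $XB=BX$, extracting the coefficient of $t^i$ gives $X_i \in C_{M_n(F)}(A_0)\cap C_{M_n(F)}(B_0) = C_{M_n(F)}(F[A_0,B_0])$, and by \cite[Theorem 1.1]{NeSa} together with the hypothesis $\dim_F F[A_0,B_0]=n$ this centralizer equals $F[A_0,B_0]$ itself, which has dimension $n$. For the matching lower bound, for any polynomial $p \in F[x,y]$ the element $p(A,B)$ lies in $E$ and has $\mathrm{lc}(p(A,B)) = p(A_0, B_0)$, so $\mathrm{lc}(E_0/E_1) \supseteq F[A_0,B_0]$ gives $\dim_F E_0/E_1 \ge n$. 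Multiplication by $C=t$ descends to an injection $E_i/E_{i+1} \hookrightarrow E_{i+1}/E_{i+2}$ (injective because the coefficient of $t^{i+1}$ in $tx$ is $\mathrm{lc}(x)$), so $\dim_F E_i/E_{i+1}$ is nondecreasing in $i$. Squeezing $n \le \dim_F E_0/E_1 \le \dim_F E_1/E_2 \le \cdots \le \dim_F E_k/E_{k+1} \le n$ forces every term to equal $n$, and hence $\dim_F E = (k+1)n$.

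The main obstacle is to be sure that \cite[Theorem 1.1]{NeSa} really does deliver the self-centralizer statement in this setting. But this is precisely the characterization already invoked in the proof of (\ref{theorem_enum:F[A,t] has full dimension}) $\Leftrightarrow$ (\ref{theorem:enum: B(t) is poly in A and t}) of Theorem \ref{theorem:characterization of A(t) with A0 1-regular} (there applied to the pair $(A,t) \in M_{n(k+1)}(F)$; here applied to $(A_0,B_0) \in M_n(F)$), so no new input is required. Everything else in the argument is routine bookkeeping with the $t$-adic filtration.
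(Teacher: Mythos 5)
Your proof is correct, and while the lower-bound half coincides with the paper's, your upper bound takes a genuinely different route. Both arguments use the $t$-adic filtration $E=E_0\supseteq E_1\supseteq\cdots\supseteq E_{k+1}=0$ of $E=F[A,B,C]$, the identification of $E_0/E_1$ with $F[A_0,B_0]$ (dimension $n$ by hypothesis), and the injections $E_i/E_{i+1}{\stackrel{\cdot t}{\rightarrow}}E_{i+1}/E_{i+2}$ to get $\dim_F E\ge n(k+1)$. For the reverse inequality the paper argues geometrically: the locus $Y\subseteq\cjs$ where $\dim_F F[A_0,B_0]=n$ is irreducible (the jet-bundle argument of \S 2.2 applied to the regular locus of $\mathcal{C}_{2,n}$), hence $\overline{Y}=\overline{U}$, and the closed condition $\dim_F F[A,B,C]\le n(k+1)$, which holds on $U$ because there $F[A,B,C]=F[A,C]$ (as in the proof of Corollary \ref{cor: algebra dimension 3(k+1)}), propagates to $\overline{U}\supseteq Y$. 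You instead stay entirely inside linear algebra: the leading coefficient of an element of $E_i$ commutes with $A_0$ and $B_0$, hence lies in the centralizer of $F[A_0,B_0]$, which by \cite[Theorem 1.1]{NeSa} equals $F[A_0,B_0]$ itself since $\dim_F F[A_0,B_0]=n$; so each graded piece has dimension at most $n$ and the squeeze closes. Your version is self-contained and pointwise -- it needs no irreducibility of $Y$, no closures, no semicontinuity -- and it gives the sharper graded statement $\dim_F E_i/E_{i+1}=n$ for every $i$; the paper's version, in exchange, records the geometric containment $Y\subseteq\overline{U}$, which is of independent interest in this paper. One small precision: your sentence ``every element $X\in E$ satisfies $XA=AX$, $XB=BX$, and extracting the coefficient of $t^i$ gives $X_i\in C_{M_n(F)}(A_0)\cap C_{M_n(F)}(B_0)$'' is literally true only for $X\in E_i$ (the coefficients below $t^i$ must vanish for the $t^i$-coefficient of $[X,A]$ to reduce to $[X_i,A_0]$); that is of course exactly the situation in which you apply it, so the argument stands after rephrasing.
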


\begin{proof} The same arguments as in $\S 2.2$ applied to the entire open subscheme
of $\mathcal{C}_{2,n}$ consisting of regular points show that the
open algebraic set $Y$ in $\cjs$ where $\dim_F(F[A_0,B_0])=n$ is
irreducible. Since $Y \supset U$ and $Y$ is irreducible, the
closure of $Y$ equals $\overline{U}$. We have seen in the proof of
Corollary \ref{cor: algebra dimension 3(k+1)} that
  $\dim_F(F[A,B,C]) \le
n(k+1)$ on $\overline{U}$, thus, this relation holds on  the
closure of $Y$. Now, as in the proof of Theorem
\ref{theorem:characterization of A(t) with A0 1-regular}
((\ref{theorem_enum:F[A,t] has full dimension}) $\Rightarrow$
(\ref{theorem_enum:1-regular})), we may define $E_i$ to be the
$F$-subspace of $F[A,B,t] \subset M_n(F)[t]/t^{k+1}$ of elements
of degree at least $i$.  We then have injective $F$-space maps
$E_i/E_{i+1} {\stackrel {\cdot t} \rightarrow} E_{i+1}/E_{i+2}$,
which coupled with the fact that $\dim_F(E_0/E_1) = \dim_F(F[A_0,B_0])
= n$, show that $\dim_F(F[A,B,C]) = \sum_{i=0}^k \dim_F \left(
E_i/E_{i+1}\right) \ge n(k+1)$.  Equality now follows.

\end{proof}

 \vskip 0.5cm

\end{document}